\newtheorem{thm}{Theorem}[section]
\newtheorem{lem}[thm]{Lemma}
\newtheorem{defn}[thm]{Definition}
\newtheorem{prop}[thm]{Proposition}
\newtheorem{coro}[thm]{Corollary}
\newtheorem*{conjA}{Conjecture A}
\newtheorem*{conjB}{Conjecture B}
\theoremstyle{remark}
\newtheorem{remark}[thm]{Remark}
\theoremstyle{definition}
\newtheorem{exa}[thm]{Example}
\def\Z{\mathbb{Z}}
\def\N{\mathbb{N}}
\def\bb{\mathbb}
\def\cal{\mathcal}
\def\ol{\overline}
\def\rw{\rightarrow}
\def\Rw{\Rightarrow}
\def\Lr{\Leftrightarrow}
\def\Q{\mathbb{Q}}
\def\a{\alpha}
\def\beq{\begin{eqnarray*}}
\def\enq{\end{eqnarray*}}
\def\gm{\mathfrak{m}}
\def\stack{\stackrel}
\def\co{\cal{O}}
\def\rm{\mathrm}
\begin{document}
\title{Log ramification via curves in rank 1} 
\author{Ivan Barrientos}
\address{Audible, Data Sciences, 1 Washington Pl, Newark, NJ 07102}
\email{corps.des.nombres@gmail.com}
\subjclass[2000]{}
\begin{abstract}
We prove that in rank 1, the Abbes-Saito log conductor is determined by its restriction to curves. This result is essentially established by analyzing Artin-Schreier-Witt extensions. Consequently, we confirm an expectation of H. Esnault and M. Kerz. We also conjecture that this result holds in arbitrary finite rank. As an application, we translate recent results in higher class field theory of M. Kerz and S. Saito to the log-ramification context.
\end{abstract}
\maketitle
\tableofcontents
\section{Introduction}
Ramification plays a central role in understanding how structures behave under extensions. For example, in number theory one studies the behavior of primes under Galois extensions and distinguishes between tame (well-behaved) and wild (more complicated) ramification. The analogous concept in differential equations is that of extending the space of solutions by studying the asymptotic behavior of solutions near singular points. In this case, tame ramification corresponds to studying regular singular points and wild ramification to studying irregular singular points. Following P. Deligne, one further translates the differential equations point of view to the world of algebraic geometry by studying integrable connections along the boundary of a curve.

It was realized that one could essentially study regular singular points of differential equations from a purely algebraic approach. As a specific example, Y. Andr\'e recently provided an \emph{algebraic} proof that an integrable connection is regular if and only if it is regular under restriction to curves, \cite{And07}. The spirit of this approach is that ramification of $n$-dimensional structures is detected by $1$-dimensional substructures.

The goal of this work is to similarly study the relationship between ramification and restriction to curves in the context of wild ramification for varieties (in characteristic $p>0$). The measure of wild ramification that we use is a conductor. We prove that for abelian extensions of varieties, this conductor is computed by restricting to subcurves of the given variety. Our approach is entirely algebraic.

Let us now proceed to a more formal introduction, starting with Galois extensions of discrete valuation rings.
Let $R$ be a discrete valuation ring with residue field $\kappa$ and fraction field $K$. Suppose $L$ is a finite Galois extension of $K$. If $\kappa$ is perfect, then there is a satisfactory theory of higher ramification groups and conductors attached to $L/K$, \cite{ser1} and \cite{lau81}. On the other hand, if $\kappa$ is not perfect complications may arise; for example, the ring of integers of $L$ may not be a monogenic extension of $R$\footnote{For a comprehensive survey on ramification see \cite{xiaozhukov}.}.
In the 1980's, J.-L. Brylinski \cite{bry83} and K. Kato \cite{Kat89}. defined conductors when $\kappa$ is not necessarily perfect and $L/K$ is attached to a character of rank 1.
Recently, Abbes and Saito succeeded in providing a general definition of higher ramification groups in the context of $\ell$-adic sheaves of finite rank that agrees with the classical case of a perfect residue field in \cite{abbsai02} and \cite{abbsai11}. Consequently, they also define a conductor which agrees with Brylinski-Kato's in rank 1, see \cite{abbsai09}.

In another direction, in the 1970's P. Deligne initiated a program of measuring ramification of sheaves along a divisor in terms of transversal curves, see \cite{del76}, which was further developed by G. Laumon in \cite{lau81} and \cite{lau82}. One of the principal aims of these works was to achieve new Euler-Poincar\'e formulas (over algebraically closed fields).
A natural question was then whether one could follow P. Deligne's program and express Abbes-Saito's conductor in terms of curves. In this direction, S. Matsuda established results in the so-called `non-log' case of Brylinski-Kato's conductor in \cite{mat} (see also \cite[Coro 2.8]{kersai}). There are also stronger results for the `non-log' case recently obtained by T. Saito in \cite{sai13}.

The main contribution of this article is an algebraic proof that Brylinski-Kato's `log' conductor is calculated by restriction to curves, see Theorem \ref{tangentrank1}. In other words, we establish that Abbes-Saito's `log' conductor in rank 1 is expressed in terms of curves. Our approach is a modification of P. Deligne's original idea of analyzing ramification via transversal curves - we instead consider \emph{all} curves on a scheme and pay special attention to \emph{tangent} curves. We conjecture that our result also holds for $\ell$-adic sheaves of finite rank in Conjectures A and B below. We hope our conjectures can be used to further the analogy between irregular singular points and wild ramification in higher dimensional class field theory.

As applications of our main result, we confirm an expectation of H. Esnault and M. Kerz \cite[\S3, pg.\,11]{esnker12} in the rank 1 smooth case in Theorem \ref{thmexp2}. We also obtain a reformulation of the recent `non-log' Existence Theorem of higher class field theory obtained by M. Kerz and S. Saito, \cite[Coro IV]{kersai} in the `log' case, see \S\ref{modulus}.

The work presented here forms part of the author's PhD thesis.
\section{Conjectures A and B}
Fix a perfect field $k$ of characteristic $p>0$.
Let $X/k$ be a normal variety and $j: U\hookrightarrow X$ a $k$-smooth open subvariety such that the closed complement $E:=X\backslash U $ is the support of an effective Cartier divisor. Let $D\in \rm{Div}^+(X)$ be an effective Cartier divisor with $\rm{Supp}(D)\subset E$. Let $\cal{F}$ be a smooth $\ell$-adic sheaf of finite rank on
$U$.
Denote by $\rm{Sw}_{.}(-)_{\rm{log}}$ the Abbes-Saito log conductor of a constructible sheaf at a codimension 1 point of a scheme, \cite[Definition 8.10(i)]{abbsai11}. We omit the subscript `log' when the scheme has dimension one. Note that this definition simplifies in rank 1, see Def. \ref{cond} and Prop. \ref{abbsaicond} below.
\begin{defn}\label{cuU} For a scheme $X$, we let $\rm{Cu}(X)$ denote the set of normalizations of closed integral
1-dimensional subschemes of $X$.
\end{defn}
Given an effective Cartier divisor $W$ on a scheme and a point $y\in \rm{Supp}(W)$, we denote the multiplicity of $y$ in $W$ by $m_y(W)$; it is a non-negative integer. The set of closed points of a scheme $Y$ is denoted by $|Y|$.
\subsection{Statement of Conjecture A}\label{conjB}
Given $C\in \rm{Cu}(U)$, let $\bar{C}$ denote the normalization of the closure of $C$ in $X$. Moreover, let $\phi: C\rw U$ and $\bar{\phi}:\bar{C}\rw X$ denote the induced morphisms.
\begin{defn} (\cite[Def. 3.6]{esnker12}) Given $C\in \rm{Cu}(U)$, set
$$\rm{Sw}(\phi^*\cal{F}):=\sum_{z\in |\bar{C}|}
\rm{Sw}_z(\bar{\phi}^*j_!\cal{F}).[z] \in \rm{Div}^+(\bar{C}).$$ We say that \emph{the ramification of $\cal{F}$ is bounded by
$D\in \rm{Div}^+(X)$} if for each $C\in \rm{Cu}(U)$ the following inequality of divisors in
$\rm{Div}^+(\bar{C})$ holds: $$\rm{Sw}(\phi^*\cal{F})\leq \bar{\phi}^*(D).$$
\end{defn} In this case, we formally write $$\rm{Sw}(\cal{F})\leq D.$$
\begin{conjA}\label{expectation2} For $D$ a simple normal crossings divisor (sncd) with $\rm{Supp}(D)\subset E$, the following are equivalent (cf. \cite{esnker12}).
\begin{itemize}
\item[(i)] $\rm{Sw}(\cal{F})\leq D$.
\item[(ii)] For each open immersion $j':U\subset X'$ over $k$ such
that $X'\backslash U$ is a simple normal crossing divisor and for any morphism $h:
X'\rw X,$ \emph{that extends} $j:U\rw X$, $$\rm{Sw}_{\xi
'}(h^*j'_!\cal{F})_{\log}\leq m_{\xi '}(h^*(D)),$$ for each generic
point $\xi '\in D': = X'\backslash U$.
\end{itemize}
\end{conjA}
\subsection{Statement of Conjecture B}\label{conjA} \begin{conjB}\label{expectation1} Suppose that $D = E$ is an irreducible divisor with generic point $\xi$. Then
\begin{equation}\label{exp1limit} \sup_{\bar{\phi}}
\frac{\rm{Sw}_z(\bar{\phi}^*j_!\cal{F})}{m_z(\bar\phi^*D)} = \rm{Sw}_\xi
(j_!\cal{F})_\rm{log},\end{equation}
where the supremum ranges over all $\bar{C}\in
\rm{Cu}(X)$ and over all closed points $z\in|\bar{C}|$ with multiplicity $m_z(\bar\phi^*D)>0.$
\end{conjB}
\begin{remark}\label{sumirreducible} If $E$ is not necessarily irreducible and $D$ is an sncd, then the left-hand side of \eqref{exp1limit} should be replaced by a sum over the components of $D$. Moreover, we expect this formula to hold at each smooth point of $D$. \end{remark}
\begin{remark} In \cite[Remark 2.5.3]{zhu08}, a limit similar to that in \eqref{exp1limit} is mentioned for a smooth surface defined over an algebraically closed field.\end{remark}
\;
\;
Our main result is that if $X/k$ is smooth, $D$ is a simple normal crossing divisor on $X$, and $\cal{F}$ has rank 1, then Conjecture B is
true (see Theorem \ref{tangentrank1}) and implies Conjecture A (Theorem \ref{thmexp2}). We do not require resolution of singularities. On the other hand, if one insists that we begin with a smooth scheme $U/k$ and a \emph{proper} scheme $X/k$ that contains $U$ as an open dense subscheme, then resolution of singularities seems to be required to apply our techniques.
We also note that in the case with $D=0$ (and arbitrary rank), the equivalence in Conjecture A is shown by M. Kerz and A. Schmidt in \cite{kerscht} relying on work of Wiesend in \cite{wie1}. Moreover, in the `non-log' rank 1 setting Conjecture A follows by work of Matsuda on Kato's Artin conductor, see \cite{mat} and \cite[Coro 2.8]{kersai}. We note that \cite{mat} focuses on smooth curves on $X$ that are transversal to components of $D$.
\subsection{Remarks on the higher rank case} We make some brief remarks relating recent work by T. Saito to both of the above conjectures for general rank. First, the inequality $\leq$ in Conjecture B (a form of `semi-continuity') is evidenced by \cite[Lemma 2.31]{sai09}. Furthermore, the implication (ii) $\Rw$ (i) in Conjecture A is also evidenced by \cite[Lemma 2.22]{sai09}. Finally, results from \cite{sai13} seem to shed light on the converse of these statements as well. In particular, a careful analysis of the refined Swan conductor seems to be indispensable and is part of future work.
\section{Background}
The Brylinski-Kato filtration is introduced in \S \ref{ASWdiscussion}. In Def. \ref{newschool} we define a conductor for finite extensions of discrete valuation fields with not-necessarily separable residue field extensions. In the next section, we provide an example that gives evidence for computing the log conductor in terms of a limit over tangent curves, see \S \ref{surfacecurves}. We also define the log conductor for rank 1 characters in \S \ref{swanvariety}.
\subsection{Conductor for general discrete valuation fields}\label{ASWdiscussion}
We mostly follow
the notation in \cite[\S 9]{abbsai09}. Let $(R, v)$ be a discrete
valuation ring of characteristic $p>0$ with normalized valuation $v$. Let $K$ be the fraction field of $R$ and fix a
separable closure $K^{\text{sep}}$ of $K$.
For an integer $n\geq 1,$ let $W_n(K)$ denote the truncated ring of Witt vectors of length $n$ over $K$. The Brylinski-Kato filtration on $W_n(K)$ is an increasing filtration defined as follows.
\begin{defn} \label{aswfil} For $m\geq 0$ define the subgroup \emph{$\rm{fil}_m W_n(K)$} of $W_n(K)$ as
$$\rm{fil}_m
W_n(K) = \left\{ (x_0, \ldots, x_{n-1}):p^{n-i-1} v(x_i)\geq-m\, , \; (0\leq i\leq n-1) \right\}.$$ \end{defn}
We have that for $\bold{x} = (x_0,\ldots, x_{n-1})\in W_n(K)$, then $\bold{x}\in \text{fil}_m W_n(K)$ if
\begin{equation} \label{filpiece}
m \geq \max\{-p^{n-1} v(x_0), \ldots, -v(x_{n - 1}), 0\}.
\end{equation}
This filtration yields a conductor for characters.
Set $$H^1(K) := \varinjlim_{n\geq 1} H^1(K, \Z/n) = H^1(K,
\Q/\Z).$$
\begin{remark}\label{cyclic} Note that since $\rm{Gal}(K^{\text{sep}}/K)$
is a compact topological group (under the usual Krull topology) and
$\Q/\Z$ is a discrete topological group, then an element $$\chi\in H^1(K) = \rm{Hom}_{\text{cont}}(\rm{Gal}(K^{\text{sep}}/K), \Q/\Z),$$
must have finite image in $\Q/\Z$ and this image is therefore
a finite cyclic group.\end{remark}
Put $\eta = \rm{Spec}(K)$ and recall that for $n\geq 1$ there is a short
exact sequence of sheaves on $\eta_{\text{\'et}}$ :
\begin{equation} \label{ASW} 0\longrightarrow \Z/p^n \longrightarrow W_n
\stack{F-1}{\longrightarrow} W_n \longrightarrow 0.\end{equation}
Now, the short exact sequence (\ref{ASW}) induces a surjective connecting homomorphism of groups
$$\delta_n: W_n(K)\rw H^1(K, \Z/p^n),$$ with $\rm{ker}(\delta_n) = (F-1)W_n(K)$ and we have a canonical isomorphism of groups
\begin{equation}\label{ASWK} W_n(K)/(F-1) \cong H^1(K,
\Z/p^n).\end{equation}
The isomorphism \eqref{ASWK} classifies finite
cyclic extensions of $K$ of order $p^{n'}$, for $0\leq n'\leq n$. Moreover, there is a commutative diagram
\begin{equation}\label{Vercommute}\xymatrix{
W_n(K) \ar[d]^{V} \ar[rr]^{\delta_n} && H^1(K, \Z /p^n) \ar[d]^{\cdot p}\\
W_{n+1}(K) \ar[rr]^{\delta_{n+1}} && H^1(K, \Z/p^{n+1}).}
\end{equation}
Now write
\begin{equation}\label{connecting} \text{fil}_m H^1(K,
\Z/p^n) := \delta_n (\text{fil}_m W_n(K)).\end{equation}
The Brylinski-Kato filtration induces a filtration on all of $H^1(K)$ via
\begin{equation} \text{fil}_m H^1(K):= H^1(K)[p'] + \text{fil}_m
H^1(K)[p^\infty],\label{primezero}\end{equation}
where
$$ H^1(K)[p'] = \varinjlim_{\ell\geq 1 ,(\ell,p)=1} H^1(K, \Z /\ell),$$ and
$$H^1(K)[p^\infty] = \varinjlim_{r\geq 1} H^1(K, \Z/p^r).$$
\begin{defn}\label{cond} Let $\chi\in H^1(K)$. The Swan \emph{conductor} of $\chi$,
denoted by $\rm{Sw}(\chi)_\rm{log}$, is the minimal integer $m\geq 0$ for which
$\chi\in\rm{fil}_m H^1(K)$.\end{defn}
\begin{remark}\label{newschool} If the residue field of $K$ is finite, this definition agrees with the classical definition by a theorem J.-L. Brylinski \cite[Corollary to Theorem 1]{bry83}. In a more general case, it is due to K. Kato \cite{Kat89}.\end{remark}
We also record the following important result of A. Abbes and T. Saito.
The Abbes-Saito log conductor is defined in \cite[Def. 8.10(i)]{abbsai11} and that this definition agrees with ours is given by \cite[Coro 9.12 and Def. 10.16]{abbsai09}. We record this fact as the following
\begin{prop}\label{abbsaicond} The conductor defined above in Def. \ref{cond} is equal to the log conductor of Abbes and Saito in rank 1.
\end{prop}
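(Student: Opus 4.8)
The plan is to reduce the proposition to the rank~1 comparison theorem of Abbes and Saito, the real task being to reconcile the two definitions of the conductor rather than to reprove anything substantial. First I would recall the Abbes--Saito logarithmic ramification filtration: for every rational $\lambda\geq 0$ one has a decreasing subgroup $G_{K,\rm{log}}^{\lambda}\subseteq G_K:=\rm{Gal}(K^{\rm{sep}}/K)$, with $G_{K,\rm{log}}^{0}$ the inertia and $G_{K,\rm{log}}^{0+}:=\overline{\bigcup_{\lambda>0}G_{K,\rm{log}}^{\lambda}}$ the wild inertia. A smooth rank~1 $\ell$-adic sheaf on $\Spec K$ is the same datum as a continuous character $G_K\to\ol{\Q_\ell}^{\times}$; since inertia acts through a finite quotient, after a choice of embedding this is an element $\chi\in H^1(K)=H^1(K,\Q/\Z)$, and the Abbes--Saito log conductor of the sheaf is, by definition of \cite[Def.~8.10~(i)]{abbsai11} in rank~1, the log break $\rm{Sw}^{\rm{AS}}(\chi):=\inf\{\lambda\geq 0:\chi|_{G_{K,\rm{log}}^{\lambda+}}=1\}$, which is a non-negative integer (Hasse--Arf for the log filtration, in rank~1). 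Setting $\rm{fil}^{\rm{AS}}_m H^1(K):=\{\chi:\chi|_{G_{K,\rm{log}}^{m+}}=1\}$ one then has $\rm{Sw}^{\rm{AS}}(\chi)=\min\{m\geq 0:\chi\in\rm{fil}^{\rm{AS}}_m H^1(K)\}$, so in view of Definition~\ref{cond} the proposition becomes the equality of filtrations $\rm{fil}^{\rm{AS}}_m H^1(K)=\rm{fil}_m H^1(K)$ for all $m\geq 0$, the right-hand side being the Brylinski--Kato filtration of \eqref{primezero}.

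To establish that equality I would split $H^1(K)$ as in \eqref{primezero} into its prime-to-$p$ part and its $p$-primary part. A character of order prime to $p$ is tamely ramified, hence trivial already on $G_{K,\rm{log}}^{0+}$, so $H^1(K)[p']\subseteq\rm{fil}^{\rm{AS}}_0 H^1(K)$, matching the corresponding summand on the Brylinski--Kato side. For the $p$-primary part one reduces to $\chi\in H^1(K,\Z/p^n\Z)$ and writes $\chi=\delta_n(\bold{x})$ with $\bold{x}=(x_0,\dots,x_{n-1})\in W_n(K)$ via \eqref{ASWK}. The heart of the matter is then the assertion that $\chi\in\rm{fil}^{\rm{AS}}_m H^1(K)$ if and only if $\bold{x}$ may be chosen in $\rm{fil}_m W_n(K)$, i.e. with $p^{n-i-1}v(x_i)\geq -m$ for all $i$; this is precisely the rank~1 compatibility of Abbes and Saito, \cite[\S 9]{abbsai09}, identifying their logarithmic ramification filtration on $H^1(K,\Z/p^n\Z)$ with Kato's filtration through Witt vectors. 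Compatibility with the Verschiebung, i.e. with the diagram \eqref{Vercommute}, makes the identification consistent as $n$ grows, so that passing to the colimit and recombining with the prime-to-$p$ part gives $\rm{fil}^{\rm{AS}}_\bullet H^1(K)=\rm{fil}_\bullet H^1(K)$, whence $\rm{Sw}^{\rm{AS}}(\chi)=\rm{Sw}(\chi)_{\rm{log}}$.

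The deep input is of course the Abbes--Saito comparison theorem in \cite[\S 9]{abbsai09}, which rests on their micro-local ramification machinery and which is invoked here as a black box. Granting it, the genuinely delicate remaining step is the normalization bookkeeping: one must verify that the logarithmic upper numbering is scaled so that the integer $m$ in Kato's inequality $p^{n-i-1}v(x_i)\geq -m$ is literally the break of $\chi$ in the filtration $\{G_{K,\rm{log}}^{m+}\}_m$ (no Artin-versus-Swan shift, no correction coming from the ramification of $R$ itself), and that the ``$+$'' is arranged so that $\rm{Sw}^{\rm{AS}}(\chi)$ is the \emph{minimal} admissible $m$, in agreement with Definition~\ref{cond}. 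Tracking these conventions carefully through \cite{abbsai09} is where an off-by-one or a sign error would most readily slip in; no new idea beyond that reference is required, and Remark~\ref{newschool} serves as a consistency check in the perfect-residue-field case.
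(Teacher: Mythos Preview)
Your proposal is correct and takes essentially the same approach as the paper: both treat the proposition as a direct consequence of the Abbes--Saito rank~1 comparison theorem in \cite{abbsai09}, with no new argument beyond citing that result. The paper's proof is more terse---it simply points to \cite[Coro~9.12 and Def.~10.16]{abbsai09} together with \cite[Def.~8.10(i)]{abbsai11}---whereas you unpack what the comparison says (the $p'$/$p$-primary splitting, Verschiebung compatibility, and normalization bookkeeping); but the substance is identical.
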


\begin{remark}\label{tame} If $\chi'\in H^1(K)[p']$, then
$\rm{Sw}(\chi') = 0$, by (\ref{primezero}) above. Hence, our study of wild ramification is reduced to that of
characters in $H^1(K)[p^\infty].$
\end{remark}

\subsection{Witt vectors and extensions} \label{wittandextensions}
\begin{lem}\label{maintain} Let $\chi\in H^1(K, \Z/p^n)$ and suppose $0\leq n'\leq n$. Then $\chi$ corresponds to a cyclic $\Z/p^{n'}$-extension of $K$ if and only its corresponding Witt vector is in the image of the iterated Verschiebung modulo $(F-1)$, $$V^{n-n'} : W_{n'}(K)/(F-1) \rw W_{n}(K)/(F-1).$$\end{lem}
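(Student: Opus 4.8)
The plan is to transport the statement across the isomorphisms $\delta_n$ of \eqref{ASWK} and reduce it to identifying a distinguished subgroup of $H^1(K,\Z/p^n)$. Let $\bar{x}\in W_n(K)/(F-1)$ be the class corresponding to $\chi$ under \eqref{ASWK}. Applying the commutative square \eqref{Vercommute} a total of $n-n'$ times, the map $V^{n-n'}\colon W_{n'}(K)/(F-1)\to W_n(K)/(F-1)$ is carried by $\delta_{n'}$ and $\delta_n$ to the composite of the downward arrows of \eqref{Vercommute}, namely the map $\iota_*\colon H^1(K,\Z/p^{n'})\to H^1(K,\Z/p^n)$ induced on cohomology by the inclusion of coefficient groups $\iota\colon\Z/p^{n'}\hookrightarrow\Z/p^n$ sending $1$ to $p^{n-n'}$, that is, the identification of $\Z/p^{n'}$ with the unique order-$p^{n'}$ subgroup $p^{n-n'}\Z/p^n$ of $\Z/p^n$. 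The one thing to check here is that composing the downward arrows of \eqref{Vercommute} really yields $\iota_*$ and not ``multiplication by $p^{n-n'}$'' on the single group $H^1(K,\Z/p^n)$; this is immediate from functoriality of $H^1(K,-)$ in the coefficients, the relevant composite of coefficient inclusions $\Z/p^{n'}\hookrightarrow\Z/p^{n'+1}\hookrightarrow\cdots\hookrightarrow\Z/p^n$, each step being $1\mapsto p$, being precisely $\iota$. Hence $\bar{x}$ lies in $\mathrm{im}(V^{n-n'})$ inside $W_n(K)/(F-1)$ if and only if $\chi\in\mathrm{im}(\iota_*)$.

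The next step is to describe $\mathrm{im}(\iota_*)$. Since $\iota$ is injective, $\iota_*$ sends a continuous character $\psi\colon\mathrm{Gal}(K^{\text{sep}}/K)\to\Z/p^{n'}$ to $\iota\circ\psi$, so $\mathrm{im}(\iota_*)$ consists exactly of those continuous characters $\chi\colon\mathrm{Gal}(K^{\text{sep}}/K)\to\Z/p^n$ with $\mathrm{im}(\chi)\subseteq\iota(\Z/p^{n'})=p^{n-n'}\Z/p^n$; conversely, any such $\chi$ factors through the isomorphism $\Z/p^{n'}\cong p^{n-n'}\Z/p^n$ and thus comes from $\iota_*$. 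As $p^{n'}y=0$ holds in $\Z/p^n$ precisely when $y\in p^{n-n'}\Z/p^n$, this condition is equivalent to $p^{n'}\chi=0$ in $H^1(K,\Z/p^n)$, i.e.\ to the finite cyclic group $\mathrm{im}(\chi)$ having order dividing $p^{n'}$.

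Finally, I would translate back into Galois theory. By Remark \ref{cyclic} and the classification recorded in \eqref{ASWK}, a character $\chi\in H^1(K,\Z/p^n)$ cuts out the cyclic extension $L:=(K^{\text{sep}})^{\ker\chi}$ of $K$ with $\mathrm{Gal}(L/K)\cong\mathrm{im}(\chi)$, so ``$\chi$ corresponds to a cyclic $\Z/p^{n'}$-extension of $K$'' means exactly that $\mathrm{Gal}(L/K)$ embeds into $\Z/p^{n'}$, i.e.\ that $\mathrm{im}(\chi)$ has order dividing $p^{n'}$. Concatenating the three equivalences yields the lemma. I expect the first step to be the only delicate one: one must recognize the effect of the iterated Verschiebung on cohomology as the pushforward $\iota_*$ along the coefficient inclusion $\Z/p^{n'}\hookrightarrow\Z/p^n$, rather than as multiplication by $p^{n-n'}$ on $H^1(K,\Z/p^n)$ (whose image is in general a proper subgroup of $\mathrm{im}(\iota_*)$); granting that identification, the rest is routine bookkeeping with finite cyclic $p$-groups.
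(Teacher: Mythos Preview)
Your argument is correct and follows the same route as the paper, which simply cites the isomorphism \eqref{ASWK} and the commutative square \eqref{Vercommute}; you have merely unpacked what those two citations entail. Your care in identifying the right-hand vertical arrow as the pushforward $\iota_*$ along $\Z/p^{n'}\hookrightarrow\Z/p^n$ (rather than multiplication by $p^{n-n'}$) is exactly the point behind the label ``$\cdot p$'' in \eqref{Vercommute}, and your reading of ``corresponds to a cyclic $\Z/p^{n'}$-extension'' as $|\mathrm{im}(\chi)|\mid p^{n'}$ is the intended one.
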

\begin{proof}
This follows from the
isomorphism (\ref{ASWK}) and the commutative diagram (\ref{Vercommute}).
\end{proof}
Using the isomorphism (\ref{ASWK}), we can make explicit the $p$-cyclic
extensions of $K$ as follows. Suppose $n\geq 1$ and $L/K$ is Galois with
$\rm{Gal}(L/K) = \Z/p^{n}$. Let $\chi\in H^1(K, \Z/p^n)$ correspond to $L/K$. Then there are Witt vectors $$(x_0, \ldots,
x_{n-1})\in W_{n}(K), \; \;(\a_0, \ldots, \a_{n-1})\in
W_{n}(K^{\text{sep}}),$$ such that \begin{equation}\label{wittextension} L = K[\a_0, \ldots, \a_{n-1}],\end{equation}
where $(\a_0,\ldots, \a_{n-1})\in W_{n}(K^{\text{sep}})$ satisfies
$$(F-1)(\a_0,\ldots, \a_{n-1}) = (x_0, \ldots, x_{n-1}).$$ 
We may illustrate the calculation of the conductor of $L/K$ by choosing a `minimal lift' of $\bar{\bold{x}}$ as follows. Choose an element $\bold{y} = (y_0, \ldots, y_{n-1}) \in W_{n}(K)$ such that
\begin{itemize}
\item[(i)] $\bold{y} \equiv \bold{x} \mod (F-1)W_{n+1}(K)$,
\item[(ii)] there is a $k\in [0,n-1]$ such that $-p^{n-k}v(y_k) = \rm{Sw}(\chi).$
\end{itemize}
Clearly such a $\bold{y}$ exists simply because $W_{n+1}(K)\rw W_{n+1}(K)/(F-1)$ is surjective.
Fix a $\bold{y}$ satisfying (i) and (ii) above. By \eqref{filpiece},
\begin{equation}\label{computeswan}
\rm{Sw}(\chi) = \max\{-p^{n-1}.v(y_0), \ldots, -v(y_{n-1}), 0\}.
\end{equation}
\begin{exa}(Conductor of an Artin-Schreier extension) \label{addexample} Keep the above notation. If $n=1$, then $\rm{Sw}(\chi) = \sup\{-v(y_{0}), 0\}$, which is zero if and only if $L/K$ is tame.\begin{flushright} $\square$ \end{flushright}
\end{exa}
The following lemma further illustrates the computation of conductors from certain Witt vectors and will be used in Lemma \ref{limitfilmod} below.

\begin{lem}\label{gamma} Let $k$ be a perfect field of $\rm{char}(k) = p>0$, $K=k((t))$, $v_t$ the normalized discrete valuation on $K$, and $n\geq 0$. Define $\mu:  W_{n+1}(K) \rw \N$ by $$\mu((x_0, \ldots, x_n)) = \max\{0, -v_t(x_n), \ldots, -p^n v_t(x_0)\}.$$
Suppose $\mathbf{y} = (y_0, \ldots, y_n)\in W_{n+1}(K)$ satisfies 
\begin{itemize}
\item[(i)] $-v_t(y_n) > -p^{n-k} v_t(y_k)$, for $k=0, \ldots, n-1$ and $-v_t(y_n) >0$,
\item[(ii)] for each $z\in K, v_t(y_n + (F-1)z)\leq v_t(y_n)$.
\end{itemize}

Let $\chi\in H^1(K, \Q/\Z)$ correspond to the image of $\mathbf{y}$ in $W_{n+1}(K)/(F-1)$. Then $\rm{Sw}(\chi) = -v_t(y_n)$.

\end{lem}
Note that condition (i) is stronger than $\mu(\mathbf{y}) = -v_t(y_n)$.
\begin{proof} Let $N = -v_t(y_n)$. By definition of $\rm{Sw}(\chi)$, we have $N\geq \rm{Sw}(\chi)$. For an arbitrary $\mathbf{z}\in W_{n+1}(K)$ we need to show $\mu(\mathbf{y} + (F-1)\mathbf{z}) \geq N.$ If $\mu(\mathbf{z}) = 0$, Witt vector addition directly yields this inequality; so assume $\mu(\mathbf{z}) > 0$. Let $m$ be the minimal integer such that $-p^{n-m}v_t(z_m) = \mu(\mathbf{z})$.

First, suppose $\mu(\mathbf{z}) >N/p$.  By Witt vector addition, $\mu(\mathbf{y} + (F-1)\mathbf{z}) = \mu(F\mathbf{z})$.  This can be seen by looking at the valuation of the $m$-th place in both expressions. Hence, $\mu(\mathbf{y} + (F-1)\mathbf{z}) \geq N.$ Similarly, Witt vector addition shows that if $\mu(\mathbf{z}) < N/p$, then $\mu(\mathbf{y} + (F-1)\mathbf{z}) \geq N$.

Finally, suppose $\mu(\mathbf{z}) = N/p$. If $m<n$, we argue as in the first case above. If $m = n$,  we have $v_t(z_n) = -N/p$. But then $v_t(y + z_n^{-N}  - z_n) = -N/p > -N = v_t(y_n)$, which contradicts condition (ii).
\end{proof}

\section{Conductors on varieties}\label{swanvariety}
We mostly follow the notation in \cite{kersai}. Let $k$ be a perfect field and let $X/k$ be a normal variety. Let $U\subset X$ be an open subvariety that is smooth over $k$ and such that the reduced closed complement $E:=X\backslash U$ is the support of an effective Cartier divisor on $X$. The discrete valuation rings of interest to us will be those defined by points `at infinity' and those from closed points of certain curves on $X$. 
Denote by $I$ the set of generic points of $E$ and $X^{(1)}$ the set of codimension 1 points of $X$. Then $I\subset X^{(1)}$. For $\lambda\in I$, let $K_\lambda$ be the fraction field of the henselian discrete valuation ring $\co_{X,\lambda}^h$.
Recall that in Def. \ref{cond} we defined the Swan conductor for characters over discrete valuation fields.


Let $H^1(U, \Q/\Z)$ denote the group of continuous characters $\chi: \pi_1^{\rm{ab}}(U) \rw \Q/\Z$, where $\pi_1^{\rm{ab}}(U)$ is the abelianized \'{e}tale fundamental group of $U$.

\begin{defn}\label{condgen} Fix $\chi\in H^1(U,\Q/\Z)$. We define
$$\chi|_\lambda \in H^1(K_\lambda, \Q/\Z),$$ to be the specialization of $\chi$ to $\rm{Spec}(K_\lambda)$ induced by the canonical composition
$$\rm{Spec}(K_\lambda)\rw \rm{Spec}(\co_{X,\lambda}^h)\rw X.$$ The log Swan conductor of $\chi|_\lambda$ is denoted as $$\rm{Sw}_\lambda(\chi)_{\rm{log}}.$$
\end{defn}
\begin{remark} Since $k$ is perfect and $X/k$ is of finite type, the residue field of $K_\lambda$ has transcendence degree equal to $\dim(X)-1$, and therefore this residue field is perfect if and only if $\dim(X)=1$.\end{remark}
\begin{defn}\label{curveson} A \emph{curve} $C$ is a 1-dimensional integral scheme and a \emph{curve} $C$ \emph{on} a scheme is a closed subscheme that is a curve.\end{defn}
\begin{defn}\label{curvesboundaryE} We let $Z_1(X,E)$ denote the set of curves $\bar{C}$ on $X$ such that $\bar{C}$ is not contained in $E$, i.e. such that $\bar{C}\not\subset \rm{Supp}(E).$
\end{defn}
\begin{defn}\label{infiniteplaces} For $\bar{C}\in Z_1(X,E)$ let $$\Psi_{\bar{C}}: \bar{C}^N\rw \bar{C},$$ denote the normalization of $\bar{C}$. We put $$\bar{C}_\infty = \{z\in |\bar{C}^N| : \Psi_{\bar{C}}(z)\in E\}.$$
\end{defn}
When we want to specify a divisor $D'\subset\rm{Supp}(E)$, we write $Z_1(X,D')$, resp. $\bar{C}_\infty(D')$.
We can think of $\bar{C}_\infty$ as the set of places of the global field $K(\bar{C}) = K(\bar{C}^N)$ ``on the boundary'' $E$. Given $\bar{C}\in Z_1(X,E)$ and $z\in \bar{C}_\infty$, we write $K(\bar{C})_z$ for the henselization of $K(\bar{C})$ at the place corresponding to $z$.
\begin{defn}\label{condcurves} Given $\bar{C}\in Z_1(X,E)$ and $z\in \bar{C}_\infty$, we define $$\chi|_{\bar{C},z}\in H^1(K(\bar{C})_z, \Q/\Z),$$ to be the restriction of $\chi$ via $$\rm{Spec}(K(\bar{C})_z)\rw \bar{C}\hookrightarrow X.$$ Since $\dim{\bar{C}}=1$, we do not need to emphasize a `log' conductor and we unambiguously write $$\rm{Sw}_z(\chi|_{\bar{C}}),$$ for the conductor of $\chi|_{\bar{C},z},$ which coincides with the classical Swan conductor (see Remark \ref{newschool}).
More generally, suppose $\tilde{C}$ is a normal curve with a given finite morphism $$\phi:\tilde{C}\rw X,$$ and that $\tilde{z}\in |\tilde{C}|$ is a closed point satisfying $\phi(\tilde{z})\in E$. We emphasize the morphism $\phi$ by writing $$\rm{Sw}_z(\phi^*\chi),$$ for the conductor of $\chi$ restricted to $\tilde{C}$.
\end{defn}
\subsection{Example of computing by tangent curves}\label{surfacecurves}
Here we provide evidence for Conjecture B in the rank 1 case on a surface. The curves constructed in this example will serve as a prototype for the general higher dimensional case. The motivated reader may gain intuition by plotting some of the following geometric objects.
\begin{exa} (Artin-Schreier for a surface.) Let $k$ be a perfect field of $\rm{char}(k) = p>0 , X = \rm{Spec}(k[x,y]),$ and $E = D = V(y)$ so that $U =
\rm{Spec}(k[x,y][1/y]).$ We will
consider both ``fierce'' and ``non-fierce'' wild ramification (cf.
\cite [\S 2]{lau82}) to give evidence that curves tangent to $D$ handle
both phenomena.
Let $A$ be the henselization of $k[x,y]$ at the prime ideal $(y)$, set $K = \rm{Frac}(A)$, and denote by $v_y$ the normalized discrete valuation on $K$.
Consider the Artin-Schreier equation over $A$: $$t^p - t = x^a/y^b,$$ with $a\geq
0, b\geq 1$. We can further assume that $x^a/y^b$ is not a
$p^{\text{th}}$-power in $K$, i.e. that $p$ is prime to $a$ or $b$.
Let $$L = K[t]/(t^p -
t-x^a/y^b),$$ and let $\tilde{A}$ be the integral closure of $A$ in $L$; then $\tilde{A}$
is also a discrete valuation ring. Let $\bold{k}(A) = k(x)$ denote the
residue field of $A$ and $\bold{k}(\tilde{A})$ the residue field of $\tilde{A}$. The extension $\tilde{A}/A$ is fierce if the extension of residue fields
$\bold{k}(\tilde{A})/\bold{k}(A)$ is inseparable, i.e. if $\bold{k}(\tilde{A})$
contains a $p$-th root of $x$. In the case we consider here, $\tilde{A}/A$ is therefore fierce if $p$ divides $b$ but not $a$.

We consider the curves $C_e$ on $X$
defined by the equation $y = x^e$, for $e>0$. Geometrically, as $e$ increases, the
$C_e$ progressively become more tangent to the $x$-axis in the
plane near the origin. The extension $L/K$ corresponds to an element $\chi\in H^1(K,\Z/p).$ The restriction of $\chi$ to $C_e$ yields a character $\chi |_{C_e}\in H^1(K(C_e), \Z/p)$ that corresponds
to the Artin-Scrheier extension of $K(C_e)$ given by $$t^p - t = x^{a}/x^{be} = \frac{1}{x^{be-a}}.$$
Let $\phi: C_e\hookrightarrow X$ denote the closed immersion and fix a closed point $z\in |C_e|$ such that
$\phi(z)\in D$. The following calculations of the limsup clearly indicate the necessity of considering curves that are not transversal to the divisor $D$.
\begin{itemize}
\item[(i)] Non-fierce case, i.e. $p\nmid b$.
\end{itemize}
The extension of
residue fields $\bold{k}(\tilde{A})/\bold{k}(A)$ is trivial because $(b,p)=1$. Since $v_y(x^a/y^b) = -b$, Example \ref{addexample} gives
$$\rm{Sw}_{(y)}(\chi)_{\log} = b.$$
The Artin-Schreier equation over $C_e$ in this case is $$t^p -t =
\frac{1}{x^{be-a}}.$$ If $be-a\leq 0$, then $\rm{Sw}_{z}(\chi |_{C_e})$ = 0, so we may assume $e>0$ satisfies $be-a>0$.
By \eqref{computeswan} we have that if $p|(be-a)$, then $\rm{Sw}_{z}(\chi |_{C_e}) <be-a$. On the other hand, if $(be-a, p)=1$, then $\rm{Sw}_{z}(\chi |_{C_e}) =
be-a$. Therefore, $$\limsup_{e>0} \frac{\rm{Sw}_{z}(\chi |_{C_e})}{e} =
\lim_{e\rw +\infty} b - \frac{a}{e} = b,$$ as expected. Observe that it was necessary to consider the limsup over\emph{all} $e>0$ in order to ensure we range over those $e>0$ with $(be-a, p)=1$.
\begin{itemize}
\item[(ii)] Fierce case. In this case assume $p|b$ and $a\geq 1$ with $(a,p)=1$.
\end{itemize}
The extension of residue fields is purely inseparable,\footnote{ In fact, let $v_{\tilde{A}}: L^\times\rw \Q$ be a valuation on $L$ normalized by $v_{\tilde{A}}(y)=1$. Then $v_{\tilde{A}}(ty^b) = -1 + b >0$, which implies $x\in \bold{k}(\tilde{A})^p$.}
so we cannot appeal to the classical Swan conductor. We explicitly compute
the conductor by the Brylinski-Kato filtration. Represent $x^a/y^b$ in $W_1(K)/(F-1)$ as $x^a/y^b + f^p - f$, where $f\in K$. Either $v_y(f) < -b/p$ in which case $v_y(x^a/y^b + f^p - f) < -b$, or $v_y(f)\geq -b/p$ in which case $v_y(x^a/y^b + f^p - f) = v_y(x^a/y^b) = -b$ because $x^a$ is not a $p$-th root in $K$. We obtain
$$\rm{Sw}_{(y)}(\chi)_{\log} = b.$$
For each $e>0$, restricting to $C_e$ gives $$t^p -t = x^a/x^{be} =
\frac{1}{x^{be-a}},$$ and since $(be-a,p)=1$, we have
$$\limsup_{e>0} \frac{\rm{Sw}_{z}(\chi |_{C_e})}{e} = \lim_{e\rw+\infty} b - \frac{1}{e} = b,$$ as desired.
\begin{flushright} $\square$ \end{flushright}
\end{exa}
\section{Theorem B1} \label{exp1r1}
For the rest of the paper, $k$ is a perfect field of $\rm{char}(k) = p>0$. Recalling the definitions from \S\ref{swanvariety}, we now state the main theorem of this article.
\begin{thm}\label{tangentrank1} Let $X/k$ be a smooth variety and $U\subset X$ an open subvariety such that $D:=X\backslash U$ is an irreducible divisor on $X$. Denote by $\xi$ the generic point of $D$. Let $\chi\in H^1(U, \Q/\Z)$. Then $$\sup_{\bar{C}}
\frac{\rm{Sw}_z(\chi|_{\bar{C}})}{m_z(\bar\phi^*D)} = \rm{Sw}_\xi
(\chi)_\rm{log},$$ where the supremum ranges over all $\bar{C}\in
Z_1(X,E)$ and also over all points $z\in \bar{C}_\infty\cap D^{sm}$ with multiplicity $m_z(\bar\phi^*D)>0.$
\end{thm}
We prove the theorem at a closed point in \S \ref{local1}, then proceed to the general case in \S \ref{global1}.

\;
\subsection{Local setting}\label{local1}
We begin by formulating the problem around a closed point; the main reason we localize and complete at a closed point is to apply Cohen's structure theorem which enables us to make Artin-Schreier-Witt extensions explicit.
Let $x\in |X|\cap \rm{Supp}(D)$ that is smooth at $D$. Write the completion of $X$ at $x$ as
$$X_x: = \rm{Spec} (\widehat{\co}_{X,x}).$$ Then
$\widehat{\co}_{X,x}$ is a Noetherian and complete regular local ring and by Cohen's structure theorem (see e.g. \cite[4.2.28]{liu02}) there is a finite extension $k'/k$ such that $$
\widehat{\co}_{X,x}\cong k'[[t_1,\ldots, t_d]]. $$
Restricting $D$ to $X_x$, we assume
without loss of generality that locally around $x$, it is
defined by $(t_1 = 0)$. Let $U = X_x\backslash V(t_1)$. We prove
the following
\begin{thm}\label{localtangent} (Local Tangent Theorem) Let $\chi\in H^1(U) = H^1(U,\Q/\Z).$ There are irreducible, \emph{regular} curves $\phi_e: C_e\rw X_x$ with $C_e\in Z_1(X_x, D)$ such that
\begin{equation}\limsup_{e\rightarrow\infty} \frac{\rm{Sw}_x
(\chi|_{C_e})}{m_x(\phi_e^*D)} = \rm{Sw}_{\xi}(\chi)_{\log}.
\label{limit}\end{equation}
\end{thm}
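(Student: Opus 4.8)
The plan is to read off both sides of \eqref{limit} from the Brylinski--Kato reduced form of an Artin--Schreier--Witt vector, restricted along a one-parameter family of regular curves that become progressively tangent to $D=V(t_1)$ in the Cohen coordinates $\widehat\co_{X,x}\cong k'[[t_1,\dots,t_d]]$.

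First I would reduce to the wild part: by Remark \ref{tame} the prime-to-$p$ component of $\chi$ is tame and stays tame on restriction to any curve, so I may assume $\chi\in H^1(U_x)[p^\infty]$, whence $\chi=\delta_n(\mathbf f)$ with $\mathbf f\in W_n(\Gamma(U_x,\co_{U_x}))=W_n(k'[[t_1,\dots,t_d]][1/t_1])$ for some $n\ge1$. Next I would pass to the completed local field at $\xi$, which by Cohen is $\widehat K_\xi\cong\kappa(\xi)((t_1))$ with imperfect residue field $\kappa(\xi)=\rm{Frac}(k'[[t_2,\dots,t_d]])$, and pick a representative $\mathbf x=(x_0,\dots,x_{n-1})$ of $\chi|_\xi$ attaining the minimum in \eqref{computeswan}; setting $b_i:=-v_{t_1}(x_i)$ this gives $s:=\rm{Sw}_\xi(\chi)_{\log}=\max\{p^{n-1}b_0,p^{n-2}b_1,\dots,b_{n-1},0\}$. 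The crucial observation is that, since $\mathbf f$ has poles only along $V(t_1)$ while $k'[[t_2,\dots,t_d]]$ is a normal domain over the field $k'$, which is perfect (being finite over the perfect field $k$), the $p$-th roots produced by the reduction algorithm already lie in $k'[[t_2,\dots,t_d]]$; hence one may take the components of $\mathbf x$, and of the Witt vector $\mathbf z$ with $(F-1)\mathbf z=\mathbf x-\mathbf f$, in the subring $R:=k'[[t_2,\dots,t_d]]((t_1))$ of $\widehat K_\xi$, i.e.\ with honest power-series coefficients in $t_2,\dots,t_d$.

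Then I would construct the curves: for $e>0$ let $\phi_e\colon C_e\to X_x$ be the regular curve $C_e\cong\Spec k'[[u]]$ cut out in $\widehat\co_{X,x}$ by $(t_1-t_2^{\,e},\,t_3-\psi_3(t_2),\dots,t_d-\psi_d(t_2))$ with $\psi_j\in t_2\,k'[[t_2]]$ chosen generically (after a harmless finite extension of $k'$ if $k'$ is finite), so that on $C_e$ one has $u=t_2$, $t_1=u^e$, $t_j=\psi_j(u)$. Then $C_e\in Z_1(X_x,D)$ is not contained in $D$, one has $m_x(\phi_e^*D)=v_u(u^e)=e$, and genericity forces the $u$-valuation of the restricted leading coefficient $a_{i,-b_i}|_{C_e}\in k'[[u]]$ of $x_i$ to equal $\rm{mult}_0(a_{i,-b_i})$, a bound $\le M$ independent of $e$. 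Since the components of $\mathbf x,\mathbf z,\mathbf f$ all lie in $R$ and $\phi_e^*$ maps $R$ continuously to $k'((u))$, the Witt vector $\phi_e^*\mathbf x=\phi_e^*\mathbf f+(F-1)\phi_e^*\mathbf z\in W_n(k'((u)))$ is a representative of $\chi|_{C_e}$; writing $\phi_e^*x_i=\sum_{j\ge-b_i}(a_{i,j}|_{C_e})u^{ej}$, for $e>M$ the term $j=-b_i$ strictly dominates, so $v_u(\phi_e^*x_i)=-eb_i+\rm{mult}_0(a_{i,-b_i})\ge-eb_i$. Hence $\phi_e^*\mathbf x\in\rm{fil}_{es}\,W_n(k'((u)))$ by \eqref{filpiece}, so $\rm{Sw}_x(\chi|_{C_e})\le es$, and dividing by $m_x(\phi_e^*D)=e$ gives $\limsup_{e\to\infty}\rm{Sw}_x(\chi|_{C_e})/m_x(\phi_e^*D)\le s$.

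For the matching lower bound I would restrict $e$ to a suitable residue class — coprime to $p$ and subject to finitely many further congruences — chosen so that $\phi_e^*\mathbf x$ is itself reduced over $k'((u))$, whose residue field $k'$ is perfect; then $\rm{Sw}_x(\chi|_{C_e})=es-\epsilon_e$ with $0\le\epsilon_e$ bounded independently of $e$, so $\rm{Sw}_x(\chi|_{C_e})/m_x(\phi_e^*D)=s-\epsilon_e/e\to s$ along this subsequence, and \eqref{limit} follows by combining with the previous paragraph. The point of the generic $\psi_j$ and of the congruence class of $e$ is exactly to keep the leading coefficients from becoming $p$-th powers, and the leading pole orders from becoming divisible by $p$, in precisely the situations where that would enable a further Artin--Schreier--Witt reduction over the now-perfect residue field — this is the higher-length-$n$ analogue of the two cases computed in \S\ref{surfacecurves}, where the extra numerator $x^a$ keeps the pole order over $C_e$ prime to $p$.

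I expect this last step to be the main obstacle. Over $\kappa(\xi)$ the residue field is imperfect, so ``fierce'' phenomena occur and the pole orders $b_i$ genuinely compute $s$; over $k'((u))$ the residue field is perfect and many more reductions become a priori available, so one must show that the congruence restriction on $e$ confines the resulting decrease of $\rm{Sw}_x(\chi|_{C_e})$ to a bounded $\epsilon_e$ rather than to a fixed proportion of $es$. This Artin--Schreier--Witt bookkeeping for arbitrary length $n$, together with the verification that the reduced form at $\xi$ may be taken with power-series coefficients, are the two points that require real work; the later transfer to the henselian and global settings via Artin approximation is then routine.
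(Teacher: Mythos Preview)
Your overall strategy matches the paper's: reduce to the $p$-primary part, choose a Witt-vector representative realizing $\rm{Sw}_\xi(\chi)_{\log}$, build a one-parameter family of regular curves increasingly tangent to $D$, and read off both sides of \eqref{limit} from pole orders. The upper bound argument is essentially the paper's Corollary~\ref{allcurves} and is fine.

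The gap is precisely where you locate it, and it is a real one. Your curves fix $t_2=u$ and only let $\psi_3,\dots,\psi_d$ vary generically, so the $u$-valuation $c_i=\mathrm{mult}_0(a_{i,-b_i})$ of each restricted leading coefficient is a number you cannot move. Over $k'((u))$ the residue field is perfect, so ``reduced'' simply means $p\nmid$ pole order; hence you need $p\nmid(eb_i-c_i)$. If $p\mid b_i$ you would need $p\nmid c_i$, but a Brylinski--Kato--reduced $x_i$ can very well have $p\mid c_i$: already in dimension $d=2$ take $a_{i,-b_i}=t_2^{\,p}+t_2^{\,p+1}$, which is not a $p$-th power in $\kappa(\xi)$ yet has $c_i=p$. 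Then $eb_i-c_i\equiv 0\pmod p$ for every $e$ and no congruence class on $e$ rescues you. The paper avoids this in two moves you do not make: first it performs a sharper \emph{monomial-level} reduction, subtracting each $p$-power monomial of the leading coefficient separately (the paragraph in the proof of Lemma~\ref{pdivN} beginning ``we may assume that each term in $B$ is not a $p$-power''); second, its curves are purely monomial $t_j\mapsto w^{m_j}$ with \emph{all} exponents $m_j$ free, and the $B$-good vector (Lemma~\ref{mexists}) together with Lemma~\ref{pdivN} chooses them so that a single monomial of $B$ dominates and $p\nmid v(\Phi_{\mathbf m}(B))$. That extra freedom in the exponent of $t_2$ is exactly what your parametrization lacks.

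For length $n>1$ there is a second point you flag but do not address: even once the dominant coordinate is arranged to restrict to something reduced, one must rule out that Witt-vector carries from the other coordinates, under $(F-1)$-equivalence, push the Swan conductor down by a proportion of $es$ rather than a bounded amount. The paper does not attempt to make $\phi_e^\ast\mathbf x$ globally reduced; instead it uses the commutative diagram \eqref{Vercommute2} to reduce by induction on $n$ to the case where the \emph{last} coordinate alone realizes $N$, and then proves a stability lemma (Lemma~\ref{lastn}): under that dominance hypothesis, adding any $(1-F)\vec y$ with $y_n=0$ cannot decrease $\Gamma$ below $-v(x_n)$. This is a genuine computation with the universal addition polynomials $S_j$ and is the content that replaces your phrase ``confines the resulting decrease to a bounded $\epsilon_e$.'' Your generic-curve approach could likely be completed, but it would need both an analogue of the monomial-level reduction and an analogue of Lemma~\ref{lastn}; as written, the lower bound is a sketch rather than a proof.
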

\begin{remark} Since
$X_{x}$ is a local scheme, so is $C_e$, and $|C_e| =
\{x\}$.
\end{remark}
Theorem \ref{localtangent} is obvious if $\dim(U) =1$, for then we
simply take $C_e = X_x$, so from now on we assume $\dim(U)>1$.

By Remarks \ref{cyclic} and \ref{tame}, we are reduced
to the case where $\chi\in H^1(U, \Z/p^{n+1})$ for some $n\geq 0$ (recall $\text{char}(k) =p>0$).

We prove the Local Tangent Theorem \ref{localtangent} first for $n=0$ in \S \ref{ascase} and then proceed to the general case in \S \ref{aswproof}.
We fix the following notation. Let \begin{equation} \label{inclusion1} A_x: =k'[[t_1,\ldots, t_d]],\end{equation} and
\begin{equation} \label{inclusion2} R := \Gamma(U, \co_U) = A_x[1/t_1].\end{equation}
\;
Set $$N = \rm{Sw}_{\xi}(\chi)_{\log} .$$ If $N=0$ (i.e. if $\chi$ is tame at $\xi$), then for \emph{any} subscheme $Z\subset X_x$, the restriction $\rm{Sw}_x(\chi|_Z)$ is zero. Hence we assume that $N>0$ in what follows. Furthermore, by the Artin-Schreier-Witt isomorphism (\ref{ASWK}), $\chi$ corresponds to a unique element $$\ol{\mathbf{f}} = \ol{(f_0,\ldots, f_{n})}\in W_{n+1}(R)/(F-1).$$

We will use the following lemma in the sequel when reducing to curves. It expresses a form of upper semi-continuity\footnote{ Recall that on a metric space, a function $f$ is upper semi-continuous near $x_0$ if $\limsup_{x\rw x_0} f(x) \leq f(x_0)$. For $x<x_0$ we think of $f(x)$ as $f(x; C,z) := \rm{Sw}_x(\chi|_C)/m_z(\phi^*D)$ and $f(x_0)$ as the number $\rm{Sw}_\xi (\chi)_\rm{log}$. } of the conductor's restriction to curves.
\begin{lem}\label{allcurves}
If $\phi:C_x\rw X_x$ is a curve on $X_x,$ then we have the inequality
$$\rm{Sw}_{x}(\chi|_{C_x})\leq m_x(\phi^*D) \,\rm{Sw}_\xi(\chi)_{\rm{log}}.$$
\end{lem}
\begin{proof}
This assertion is clear on pulling-back Witt vectors over discrete valuation rings from $\co_{X_x,\xi}^h$ to $\co_{C_x,x}^h$. 

\end{proof}

\begin{remark} For related results, see \cite[Prop 2.7 (i)]{kersai} and \cite[Coro 2.8]{kersai}.
\end{remark}

\subsubsection{The Artin-Schreier case}\label{ascase}
In this case, $\chi$ corresponds to a unique element $\bar{f}\in R/(F-1)$. We choose a lift $f\in R$ satisfying $$-v_{t_1}(f)= N.$$ Note that $f\not\in R^p$. Write\begin{equation} \label{repff} f= \frac{B}{t_1^N}+\frac{B'}{t_1^{N-1}},\end{equation}
where $B$ is a non-zero element of $k'[[t_2,\ldots, t_d]]$ and $B'\in A_x$. \begin{remark}\label{modifyf} If $p|N$, we modify $f$ so that each term of $B$ is not a $p$th power in $A_x$ as follows. Suppose that for our initial choice of $f$, the term $B$ has elements of the form $h^p$ for some (non-zero) $h\in A_x$. Write $N = pN'$ for some $0<N'<N$. Then in $R/(F-1)$ we have $$\frac{h^p}{t_1^{pN'}} \equiv \frac{h}{t_1^{N'}},$$ and $$\frac{B}{t_1^N} \equiv \frac{B-h^p}{t_1^N}+\frac{h}{t_1^{N'}} .$$ Therefore, we may choose $f\in R$ so that $h$ is a term of $B'$ in the expression $f = B/t_1^N + B'/t^{N-1}$. With this choice of $f$, each term in $B$ is not a $p$th power in $A_x$.
\end{remark}
\subsubsection*{Defining the curves $C_e$}\label{constructcurvesI}
We construct curves $C_e$ on $X_x$ so that the image of $B$ in
$\Gamma(C_e, \co_{C_e})$ is non-zero. The subscript $e$ is motivated by the fact that we want the multiplicity of $C_e$ with the divisor $V(t_1)$ to be $e$.
We begin by defining a surjective morphism $$\Phi_e: k'[[t_1,\ldots, t_d]] \rw k'[[w]],$$ that sends each
parameter $t_i$ to either a power of $w$ or zero, and satisfies
$\Phi_e(B) \neq 0$.
Write $B$ as a sum of monomials $$B = \sum_{(i_2, \ldots, i_d)}
u_{(i_2, \ldots, i_d)}t_2^{i_2}\cdots t_d^{i_d},$$ where each $i_k\geq 0$ and  $u_{(i_2,
\ldots, i_d)}\in k'$. From this sum, choose a \emph{non-zero}
multi-index $(j_2, \ldots, j_d)$ that has a \emph{minimal} number of
$j_q\neq 0$. By a change of coordinates, we can assume such a term
is a polynomial in $t_2,\ldots, t_r$, for some $r$ with $2\leq
r\leq d$. Denote by $\bb{B}_r$ the set of monomials of $B$ of the
form $t_2^{a_2}\cdots t_r^{a_r}$ with each $a_i>0 \; (1\leq i\leq
r)$.
\begin{lem}\label{mexists}
For $\mathfrak{n} = (n_2, \ldots, n_r)\in \mathbb{N}^{r-1}$, define
$$\Psi^\mathfrak{n}: \mathbb{B}_r\rw \N,$$ by
$$\Psi^{\mathfrak{n}}(t_2^{a_2}\cdots t_r^{a_r}) = n_2a_2+\cdots +
n_ra_r.$$
Let $g = t_2^{b_2}\cdots t_r^{b_r}\in \mathbb{B}_r$ be the monomial of minimal degree with respect to the
lexicographical ordering\,\footnote{
$(m,n) \leq (m',n') \Lr m < m'$ or ($m = m'$ and $n \leq n'$).}.
Then there is an element $\gm = (m_2,\ldots, m_r)\in \mathbb{N}^{r-1}$ (depending on the minimal term $g\in \mathbb{B}_r$) satisfying
\begin{equation}\label{mstrict} m_r=1 \; \text{and} \; \Psi^\mathbf{\gm}(g) < \Psi^\mathbf{\gm}(h),\forall h\in \mathbb{B}_r\backslash \{g\}.\end{equation}
\end{lem}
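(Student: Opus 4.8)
The plan is to realize the lexicographic order on exponent vectors by a single $\N$-linear functional whose weights are suitably skewed powers of one fixed base, arranged so that the weight of $t_r$ is forced to be $1$. Since $\mathbb{B}_r$ is a finite set, first I would fix an integer $M\geq 1$ bounding every exponent occurring in a monomial of $\mathbb{B}_r$, i.e. $a_i\leq M$ whenever $t_2^{a_2}\cdots t_r^{a_r}\in\mathbb{B}_r$.

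Next I would set $\gm=(m_2,\ldots,m_r)$ with $m_i:=(M+1)^{r-i}$ for $2\leq i\leq r$. This lies in $\N^{r-1}$ and, crucially, $m_r=(M+1)^{0}=1$, so the first requirement in \eqref{mstrict} holds by construction. The point is that $\Psi^{\gm}$ then reads an exponent vector $(a_2,\ldots,a_r)$ as a number written in base $M+1$ with the exponent of $t_2$ as the leading digit; since all exponents lie in $\{0,1,\ldots,M\}$, the usual positional-numeral estimate shows $\Psi^{\gm}$ is \emph{strictly} monotone for the lexicographic order on $\{0,\ldots,M\}^{r-1}$. Concretely, if $(a_2,\ldots,a_r)<_{\mathrm{lex}}(b_2,\ldots,b_r)$ and $j$ is the first index with $a_j\neq b_j$, then the terms with $i<j$ cancel, $a_j<b_j$ forces $b_j-a_j\geq 1$, and $\bigl|\sum_{i>j}(M+1)^{r-i}(b_i-a_i)\bigr|\leq M\cdot\frac{(M+1)^{r-j}-1}{M}=(M+1)^{r-j}-1$; hence $\Psi^{\gm}(b)-\Psi^{\gm}(a)\geq (M+1)^{r-j}-\bigl((M+1)^{r-j}-1\bigr)=1>0$.

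Finally I would invoke the hypothesis that $g=t_2^{b_2}\cdots t_r^{b_r}$ is the $<_{\mathrm{lex}}$-minimal element of $\mathbb{B}_r$: for every $h\in\mathbb{B}_r\setminus\{g\}$ the exponent vector of $g$ is lexicographically smaller than that of $h$, and both vectors lie in $\{0,\ldots,M\}^{r-1}$, so the monotonicity just established gives $\Psi^{\gm}(g)<\Psi^{\gm}(h)$. This is precisely the second half of \eqref{mstrict}, and the lemma follows.

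There is no serious obstacle here — this is the standard fact that a term order restricted to a finite set of lattice points is induced by a weight vector. The only points to watch are bookkeeping ones: getting the direction of the lexicographic order right (the exponent of $t_2$ must be the most significant coordinate), and checking that the constraint $m_r=1$ is compatible with the ``leading weight dominates the tail'' estimate — taking the weights to be consecutive powers of $M+1$ with $t_2$ at the top and $t_r$ at the bottom arranges both simultaneously. I would also remark that the hypothesis that $g$ is lex-minimal is genuinely used: a monomial lying in the interior of the Newton polytope of $\mathbb{B}_r$ could never be the strict minimum of a functional with non-negative weights.
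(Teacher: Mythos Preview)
Your argument has one genuine gap: you assert that $\mathbb{B}_r$ is finite, but in the paper's setting $B$ is an element of $A_x/(t_1)\cong k'[[t_2,\ldots,t_d]]$, i.e.\ a formal power series, so $\mathbb{B}_r$ may well be infinite and there is no global bound $M$ on all exponents. Your absolute-value estimate $\bigl|\sum_{i>j}(M+1)^{r-i}(h_i-g_i)\bigr|\leq (M+1)^{r-j}-1$ then fails, because the exponents $h_i$ of a competing monomial $h$ can be arbitrarily large.

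The fix is painless, however, and your core idea survives. You only need the \emph{lower} bound $\sum_{i>j} m_i(h_i-g_i)\geq -\bigl((M+1)^{r-j}-1\bigr)$, and for that it suffices that $h_i-g_i\geq -g_i=-b_i\geq -M$; so take $M:=\max_{2\leq i\leq r} b_i$, which depends only on $g$. With this choice the rest of your computation goes through verbatim and yields $\Psi^{\gm}(h)-\Psi^{\gm}(g)\geq 1$ for every $h\in\mathbb{B}_r\setminus\{g\}$, with $m_r=1$ as required.

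Once repaired, your route differs from the paper's. The paper takes $m_r=1$ and $m_j=\sum_{i>j} m_i b_i$ recursively and argues by induction on $r$, in the style of the classical proof of Noether normalization; this produces weights tailored exactly to the exponent vector of $g$. Your approach instead linearizes the lexicographic order via base-$(M{+}1)$ digits, which is quicker and more conceptual (it is the standard ``a term order on a bounded box is a weight order'' trick), and the resulting weights are again determined by $g$ alone once you choose $M=\max_i b_i$. Both arguments hinge on the same asymmetry --- only the exponents of the lex-minimal $g$ need to be controlled --- and either is perfectly adequate for the lemma.
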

The method employed in the following construction of $\gm$ is very similar in spirit to the classical proof of
Noether normalization (see e.g. \cite[Prop. 2.1.9]{liu02}).
\begin{proof}
Write $\mathbf{b} = (b_2,\ldots, b_r)$.
Define
$$\mathbf{m} = (m_2, \ldots, m_r),$$
via
$$m_r= 1, \; m_j =\sum_{i=j+1}^r m_ib_i, \, (2\leq j<r).$$
Now assume $\Psi^{\mathbf{m}}(h)\leq\Psi^{\mathbf{m}}(g)$ for some $h = t_2^{a_2}\cdots t_r^{a_r} \in
\bb{B}_r$. We claim that $h=g$. First suppose $r=2$, so $m_2=1$. Since $g$ is of minimal degree,
we have $a_2\geq b_2$. Moreover, the hypothesis $\Psi^{\mathbf{m}}(h)\leq\Psi^{\mathbf{m}}(g)$ in this case means $a_2\leq b_2$; therefore, $a_2=b_2$ and the claim
is true for $r=2$. Now we prove the claim for $r\geq 3$. We have the following inequalities:
\begin{equation}
\label{sstar} \sum_{i=2}^r m_i a_i \leq \sum_{i=2}^r m_i b_i
\end{equation}
\begin{equation}
a_2 + \frac{\sum_{i=3}^r m_i a_i}{m_2} \leq b_2 + 1\label{nebula}
\end{equation}
\begin{equation}
(a_2-b_2) + \frac{\sum_{i=3}^r m_i a_i}{m_2} \leq 1. \label{star}
\end{equation}
The inequality \eqref{nebula} is obtained by dividing by $m_2$ and
using the definition $m_2 = \sum_{i=3}^r m_i b_i.$
Since $\mathbf{b}$ is minimal in the lexicographical ordering, we
have $a_2\geq b_2$, and so the inequality \eqref{star} implies that
either
\begin{itemize}
\item[(i)] $a_2 = b_2+1$ and $\sum_{i=3}^r m_i a_i = 0$, or
\item[(ii)] $a_2 = b_2$ and $\sum_{i=3}^r m_i a_i = \sum_{i=3}^r m_i b_i.$
\end{itemize}
Case (i) is impossible since the choice of $\mathbf{b}$ and
induction on $r$ gives $\sum_{i=3}^r m_i a_i>0$. Only case (ii) is
possible. In this case induction on $r$ gives $a_i=b_i.$ Therefore $h=g$.
\end{proof}
\begin{defn}\label{Bgood} We call an element $\gm\in \mathbb{N}^{r-1}$ satisfying (\ref{mstrict}) in Lemma \ref{mexists} a $B$-\emph{good vector}. A $B$-good vector always has last coordinate $m_r=1$. \end{defn}
\begin{prop}\label{constructcurves} For each $e>0$, there are regular curves
$\phi_e: C_e\hookrightarrow X_x$ with $\phi_e^\#(B)\neq 0$ and such that the multiplicity of $C_e$ with $V(t_1)$ is $e$.
\label{phie}
\end{prop}
\begin{proof}\label{algebraicrelations}
Let $\gm = (m_2,\ldots, m_r)$ be a $B$-good vector. Given $e>0$, our curves
$C_e$ are defined by the $k'$-morphism $\Phi_e:
k'[[t_1,\ldots, t_d]]\rw k'[[w]]$
given by \begin{eqnarray*}
\Phi_e(t_1) &=& w^e\\
\Phi_e(t_2) &=& w^{m_2}\\
\Phi_e(t_3) &=& w^{m_3}\\
&\vdots& \\
\Phi_e(t_r) &=& w^{m_r}\\
\Phi_e(t_s) &=& 0, \, (s>r).
\end{eqnarray*}
Since $\gm$ is a $B$-good vector, we have $\Phi_e(g)\neq 0$ and thus $\Phi_e(B)\neq 0.$ The morphism
$$\phi_e:C_e \rw X_x,$$ is defined as the morphism of affine schemes that corresponds to $\Phi_e$.
Note that in terms of ideals (recall $m_r=1$), for $$I =
(t_1-t_r^e, t_2-t_r^{m_2}, \ldots, t_{r-1}-t_r^{m_{r-1}}, \,
t_{r+1}, \ldots, t_d)\leq A_x,$$ we have the equality of closed
subschemes $$C_e = V(I)\subset X_x.$$ \end{proof}
By our construction above, a $B$-good vector $\gm$ is independent of the multiplicity $e>0$. Hence, the image $\Phi_e(B)$ is independent of $e$ since this image depends only on $\gm$. Only the image $t_1$ under $\Phi_e$ depends on $e$. In order to keep track of both the multiplicity $e$ and a given $B$-good vector $\gm$, from now on we write $\Phi_{e,\gm}$.
The following lemmas will be used in the sequel.
\begin{lem}\label{addqis}
Suppose $r>2$. Again write $g = t_2^{b_2}\cdots t_r^{b_r}$ for the element of minimal degree in $\mathbb{B}_r$ with respect to lexicographic order. Suppose that $\gm = (m_2,\ldots, m_r=1)$ is a $B$-good vector. Then if $Q_i \in \mathbb{N} \; (i = 2,\ldots, r-1)$ are integers satisfying
\begin{equation}\label{qis} Q_i \geq \sum_{j=i+1}^{r-1} b_j.Q_j, \end{equation}
then $$\gm' = (m_2+Q_2,\ldots, m_{r-1}+Q_{r-1}, m_r=1),$$ is a $B$-good vector.
\end{lem}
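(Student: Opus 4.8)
The plan is to verify directly that $\gm'$ satisfies the defining inequality \eqref{mstrict}, i.e.\ that for every $h = t_2^{a_2}\cdots t_r^{a_r}\in \mathbb{B}_r\setminus\{g\}$ one has $\Psi^{\gm'}(g) < \Psi^{\gm'}(h)$, using the fact that $\gm$ already has this property together with the hypothesis \eqref{qis} on the $Q_i$. The key algebraic observation is the decomposition
\[
\Psi^{\gm'}(h) - \Psi^{\gm'}(g) \;=\; \bigl(\Psi^{\gm}(h) - \Psi^{\gm}(g)\bigr) \;+\; \sum_{i=2}^{r-1} Q_i\,(a_i - b_i),
\]
which follows because $\gm'$ and $\gm$ differ only in coordinates $2,\ldots,r-1$ by the amounts $Q_i$. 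Since $\gm$ is $B$-good, the first bracket is $\geq 1$ (it is a positive integer), so it suffices to show the correction term $\sum_{i=2}^{r-1} Q_i(a_i - b_i)$ is $\geq 0$; combined with integrality this will force $\Psi^{\gm'}(h) - \Psi^{\gm'}(g)\geq 1 > 0$.

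To control the sign of $\sum_{i=2}^{r-1} Q_i(a_i-b_i)$ I would exploit that $g$ is minimal in $\mathbb{B}_r$ in the \emph{lexicographic} ordering on exponent vectors $(a_2,\ldots,a_r)$. Lexicographic minimality means that if $h\neq g$ then at the first index $i_0$ where $a_{i_0}\neq b_{i_0}$ we have $a_{i_0} > b_{i_0}$, while $a_i = b_i$ for $i < i_0$. The idea is to rewrite $\sum_{i\geq 2} Q_i(a_i-b_i)$ starting from this first index of discrepancy and use the telescoping-style bound \eqref{qis}, which says each $Q_i$ dominates the weighted tail $\sum_{j>i} b_j Q_j$; this is exactly the mechanism that lets a single positive leading difference $a_{i_0}-b_{i_0}\geq 1$ absorb any negative contributions coming from later coordinates $a_i < b_i$. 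More precisely, one shows $Q_{i_0}(a_{i_0}-b_{i_0}) \geq Q_{i_0} \geq \sum_{j>i_0} b_j Q_j \geq -\sum_{j>i_0} Q_j(a_j - b_j)$, using $a_j\geq 0$ hence $-(a_j - b_j)\leq b_j$. Since the coordinates $i < i_0$ contribute zero, summing gives $\sum_{i} Q_i(a_i - b_i)\geq 0$ as required. One must separately note the boundary case where the first discrepancy occurs at $i_0 = r$: then $a_i = b_i$ for all $i\leq r-1$, so the correction term vanishes outright and $\Psi^{\gm'}(h)-\Psi^{\gm'}(g) = \Psi^{\gm}(h)-\Psi^{\gm}(g)>0$ directly.

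Finally one records that $\gm'$ has last coordinate $m_r = 1$ by construction, so it is genuinely a $B$-good vector in the sense of Definition \ref{Bgood}. The main obstacle I anticipate is purely bookkeeping: correctly handling the index at which $h$ and $g$ first differ and making the telescoping inequality \eqref{qis} interact cleanly with possibly-zero exponents $a_j$ in $h$ (note $h\in\mathbb{B}_r$ forces the $a_j$ to be positive, which actually makes the bound $-(a_j-b_j)\leq b_j - 1 < b_j$ even easier, so the argument has a little slack). There is no serious conceptual difficulty beyond organizing this lexicographic-versus-weighted-degree comparison, mirroring the Noether-normalization-style estimate already carried out in the proof of Lemma \ref{mexists}.
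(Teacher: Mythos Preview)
Your argument is correct. The decomposition
\[
\Psi^{\gm'}(h)-\Psi^{\gm'}(g)=\bigl(\Psi^{\gm}(h)-\Psi^{\gm}(g)\bigr)+\sum_{i=2}^{r-1}Q_i(a_i-b_i)
\]
is valid, the first summand is $\geq 1$ by $B$-goodness of $\gm$, and your lexicographic bound for the correction term goes through: at the first discrepancy $i_0\leq r-1$ one has $a_{i_0}-b_{i_0}\geq 1$, whence $Q_{i_0}(a_{i_0}-b_{i_0})\geq Q_{i_0}\geq\sum_{j>i_0}^{r-1}b_jQ_j\geq\sum_{j>i_0}^{r-1}(b_j-a_j)Q_j$, and the case $i_0=r$ is trivial as you note. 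The last coordinate $m_r'=1$ is preserved, so $\gm'$ is $B$-good.

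The paper's proof is simply the one line ``Same method as in the proof of Lemma \ref{mexists},'' meaning one would rerun the divide-by-$m_2'$ induction of that lemma directly for $\gm'$. Your route is slightly different: rather than redoing that induction for the new vector, you treat the $B$-goodness of $\gm$ as a black box giving $\Psi^{\gm}(h)-\Psi^{\gm}(g)\geq 1$, and then control the perturbation $\sum Q_i(a_i-b_i)$ separately via the lex minimality of $g$ and the hypothesis \eqref{qis}. This is a cleaner and more modular argument; it isolates exactly what condition \eqref{qis} is buying (namely that the correction term is nonnegative) without re-entering the recursive machinery of Lemma \ref{mexists}. Both arguments are elementary and of the same flavor, but yours makes the logical dependence on the hypotheses more transparent.
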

\begin{proof}
We follow exactly the same method as in the proof of Lemma \ref{mexists}. Suppose $h = t_2^{a_2}\cdots t_r^{a_r} \in\mathbb{B}_r$ satisfies
$$\Psi^{\gm'}(h)\leq \Psi^{\gm'}(g).$$ For ease of notation, let $Q_r = 0$. We have
$$(a_2-b_2) + \sum_{i=3}^r(m_i a_i + Q_i a_i)\leq \sum_{i=3}^r (m_i b_i + Q_i b_i).$$ The minimality hypothesis on $g$ and induction on $r$ gives $a_i = b_i$, for $i=2,\ldots, r$. Hence, $\gm' = (m_2+Q_2, \ldots, m_{r-1} + Q_{r-1}, m_r=1)$ is a $B$-good vector.
\end{proof}
If $p|N$, we modify $\gm$ as follows. Denote by $v$ the normalized discrete valuation on $k'[[w]].$
\begin{lem}\label{pdivN} Suppose that $p|N$ and that $\gm$ is a $B$-good vector. There is a $B$-good vector $\gm'$ satisfying $$p\nmid v(\Phi_{e,\gm'}(B)).$$
\end{lem}
\begin{proof}
Recall that by Remark \ref{modifyf}, each term of $B$ is not a $p$th power. Also recall that $g = t_2^{b_2}\cdots t_r^{b_r}\in \mathbb{B}_r$ denotes the minimal lexicographic element of $\mathbb{B}_r$. Now, if $p\nmid v(\Phi_{e,\gm}(B))$ or $r=2$, no modification is necessary and we put $\gm' = \gm$. So now assume $r>2$ and $p|v(\Phi_{e,\gm}(B))$.
We may choose the minimal element $\ell\in [2,r]$ such that $p\nmid b_\ell.$ By definition of a $B$-good vector we will require the last coordinate of $\gm'$ to be $m'_r = m_r = 1$, so we consider two cases. First, suppose $\ell < r$. Set $Q_\ell =1, Q_i = 0$ for $i>\ell$, and $Q_i$ satisfying \eqref{qis}  for $i<\ell$. Define $\gm'$ by $m'_i = m_i + Q_i$ for $i= 2,\dots, r.$ By Lemma \ref{addqis}, $\gm'$ is a $B$-good vector and $$v(\Phi_{e,\gm'}(B)) \equiv b_\ell \not\equiv 0 \mod p.$$

Finally, suppose $\ell=r$. Then $p | b_i$ for $i<\ell$, $p\nmid b_{\ell}$, and $m_{\ell}=1$.

The proof is complete.
\end{proof}
Note that in Lemmas \ref{addqis} and \ref{pdivN}, the modification $\gm'$ of $\gm$ is independent of $e.$ This observation will be used implicitly in the calculations of the conductor restricted to curves $C_e$ below in \S\ref{limsup1}.
\subsubsection{The limsup in the Artin-Schreier case}\label{limsup1}
Suppose we are given $e>0$ and a $B$-good vector $\gm$. If $p|N$, modify $\gm$ as in Lemma \ref{pdivN}. We now turn to calculating the Swan conductor over the curves $\phi_e: C_e\rw X_x$
defined by the morphism of rings $\Phi_{e,\gm}.$ Let $$\bb{K} = K(C_e)\cong
\rm{Frac}\left(k'[[w]]\right),$$ be the function field of $C_e$.
Then $\bb{K}$ is a complete discrete valuation field and we write
$v$ for the normalized discrete valuation on $\bb{K}$.
Recall that $R = A_x[1/t_1]$ and denote again by
$$\Phi_{e,\gm}: R\rw \bb{K},$$ the corresponding morphism of rings. 
Write $$c := v(\Phi_{e,\gm}(B)).$$ Since $-v(\Phi_{e,\gm}(B/t_1^N)) > -v(\Phi_{e,\gm}(B'/t_1^{N'})$ for all $e\gg 0$, the term $B/t_1^N$ of $f$ determines the non-zero rational number $$\frac{\rm{Sw}_x(\chi|_{C_e})}{m_x(\phi_e^*D)} =\frac{\rm{Sw}_x(\chi|_{C_e})}{e} .$$ We can further assume that $e>0$
is sufficiently large that $Ne-c >0$ (recall that $\gm$ and $N>1,$ are independent of $e$; hence,
$c\geq 0$ is independent of $e$).
We now calculate $\rm{Sw}_x(\chi|_{C_e})$ using the Brylinski-Kato
filtration (\ref{aswfil}) on $W_1(\bb{K})/(F-1)$. The image by $\Phi_{e,\gm}$
of $B/t_1^N\in R$ is, up to a unit, equal to $w^{-(Ne-c)}$ in
$\bb{K}$. There are two cases to consider.
\begin{itemize}
\item[(i)] The first case is $p|(Ne-c)$. Then, $w^{-(Ne-c)} =
w^{-p^a.N'}$ for some $a>1$ and $N'\geq 1$ with $(N',p)=1$. Then,
$w^{-(Ne-c)}$ is equal to $w^{-N'}$ in $\bb{K}/(F-1)$. In this
case, $\rm{Sw}_x(\chi|_{C_e}) < Ne-c.$
\item[(ii)] The second case is $p\nmid (Ne-c)$. Using arguments similar to those in the fierce case of Example \ref{surfacecurves}, we have $\rm{Sw}_x(\chi|_{C_e}) = -v(w^{-(Ne-c)}) = Ne-c.$
\end{itemize}
Clearly, if $p\nmid N$, there are infinitely many $e>0$ such that case (ii) above holds. If $p|N$, then as mentioned above, we modify $\phi_e$ by Lemma \ref{pdivN} to achieve case (ii) above for infinitely many $e>0.$
Therefore, to verify the asserted limsup in \eqref{limit} it is sufficient to take the limit over those $e>0$ in case (ii) and we have
$$\limsup_{e\rw\infty}\frac{\rm{Sw}_x(\chi|_{C_e})}{m_x(\phi_e^*D)}
= \lim_{e\rw\infty}\frac{Ne-c}{e} = N,$$ as desired. The proof of Theorem \ref{localtangent} for $\chi\in H^1(U, \Z/p)$ is complete. \begin{flushright} $\square$ \end{flushright}
\subsubsection{The general case: Artin-Schreier-Witt}\label{aswproof}
Given $n\geq 1$, we prove that Theorem \ref{localtangent} is true for cyclic $\Z/p^{n+1}$-extensions, assuming it is true for all cyclic $\Z/p^{n'}$-extensions with $n'<n+1$. The proof ends in \S \ref{endlocaltangentproof}.

Fix a character $\chi\in H^1(U, \Z/p^{n+1})$ and put $$N := \rm{Sw}_\xi(\chi)>0.$$Recall that $R = \Gamma(U,\co_U) = A_x[1/t_1].$ Suppose $\mathbf{\ol{f}} \in W_{n+1}(R)/(F-1)$ corresponds to $\chi$. Choose elements $f_i\in R$, for $i=0,\ldots, n$, such that on writing $$\mathbf{f} = (f_0,\ldots, f_n)\in W_{n+1}(R),$$ 
\begin{itemize}\label{threeconds}
\item[(i)] $\mathbf{f} \equiv \mathbf{\ol{f}} \mod (F-1)W_{n+1}(R),$
\item[(ii)] there is a $k\in [0,n]$ such that
\begin{equation}\label{maxf} -p^{n-k}v(f_k) = N,\end{equation}
\item[(iii)] the minimal $k$ for which \eqref{maxf} holds is maximal among all other $\mathbf{f}'\in W_{n+1}(R)$ that satisfy (i) and (ii).
\end{itemize}
The existence of such an $\bold{f}$ follows directly from \eqref{computeswan}. We proceed by analyzing the case where $k<n$ and then with $k=n$.
First, suppose $k<n$. We apply the induction hypothesis as follows. Let $K = \rm{Frac}(R)$. From the natural embedding $R \rw K$, we have $\chi\in H^1(K,\Z/p^{n+1}).$
Denote by $V^{k+1}: W_{n-k}(K)\rw W_{n+1}(K)$ the $(k+1)$-iterated Verschiebung. The diagram \eqref{Vercommute} extends to a commutative diagram
\begin{equation}\label{Vercommute2}\xymatrix{
0 \ar[d] && 0 \ar[d]\\
W_{n-k}(K) \ar[d]^{V^{k+1}} \ar[rr]^{\delta_{n-k}}&& H^1(K, \Z /p^{n-k}) \ar[d]^{\cdot p^{k+1}}\\
W_{n+1}(K) \ar[d]^{\text{mod} \, V^{k+1} }\ar[rr]^{\delta_{n+1}} && H^1(K, \Z /p^{n+1}) \ar[d]^{\text{mod} \, p^{k+1}}\\
W_{k+1}(K) \ar[rr]^{\delta_{k+1}} \ar[d] && H^1(K, \Z/p^{k+1}) \ar[d] \\
0 && 0 } \end{equation}
Denote by $\Psi^{k+1}$ the morphism corresponding to $\text{mod} \, p^{k+1}$ in the above diagram, so $$\Psi^{k+1}: H^1(K, \Z/p^{n+1}) \rw H^1(K, \Z/p^{k+1}).$$ Let $\chi' = \Psi^{k+1}(\chi)$ and suppose $(z_0,\ldots, z_n)\in W_{n+1}(K)/(F-1)$ corresponds to $\chi$. Then $\chi'$ corresponds to $(z_0,\ldots, z_k)\in W_{k+1}(K)/(F-1)$.
Now, by (\ref{Vercommute2}) the conductor of $\chi$ is determined by the $k+1$ coordinates $z_0,\ldots, z_k$. Since $\mathbf{f}$ satisfies the above three conditions (i) - (iii) we have
$$\rm{Sw}_\xi (\chi)_{\rm{log}} = p^{n-k}\rm{Sw}_\xi(\chi')_{\rm{log}}.$$ 

\begin{lem} Keep $\mathbf{f}$ as above. For any $B$-good vector $\mathfrak{n}$ and $e>0$, write $\Phi_{\mathfrak{n}, e}(\mathbf{f}) = (\Phi_{\mathfrak{n}, e}(f_0), \ldots, \Phi_{\mathfrak{n}, e}(f_n))$. Let $C$ be the curve corresponding to $\Phi_{\mathfrak{n}, e}$ via Prop \ref{constructcurves}. Then $\rm{Sw}(\chi|_C)\leq Ne.$
\end{lem}
\begin{proof} This follows by our choice of $\mathbf{f}$ and \eqref{computeswan} above.
\end{proof}

Therefore, the commutativity of the diagram (\ref{Vercommute2}) and our induction hypothesis applied to $\chi'$ implies the assertion of Theorem \ref{localtangent} in the case $k<n$.

It remains to analyze the case $k=n$, i.e.
$$N = -v(f_n) \; \text{and} -v(f_n)>-p^{n-i}v(f_i), \forall i\in [0,n-1].$$ We first construct the morphism $\Phi_{e,\gm} : R\rw \bb{K}$ and then proceed to calculate $\limsup_{e>0} \rm{Sw}_x(\chi|_{C_e})/m_x(\phi_e^*D)$, where $\phi_e: C_e\rw X_x$ is the morphism of schemes defined by $\Phi_{e,\gm}$.
For each $f_i\neq 0$, let $$N_i= -v(f_i),$$ with $N=N_n$ and as in \eqref{repff} write
\begin{equation} \label{repf} f_i= \frac{B_i}{t_1^{N_i}} + \frac{B'_i}{t_1^{N_i -1}}, \text{with} \; 0\neq B_i\in k'[[t_2, \ldots, t_n]] \; \text{and} \; B_i'\in A_x.\end{equation}
In order to define $\Phi_{e,\gm}$, consider the term $B_n$ of $f_n$ and fix an integer $e>0$. Then we define $\Phi_{e,\gm} : R\rw \bb{K}$ with respect to $B_n$ by setting $B = B_n$ so that $\Phi_{e,\gm}$ is defined by a $B$-good vector as in Prop. \ref{constructcurves}. Then, $$\Phi_{e,\gm}(B_n)\neq 0, \forall e>0.$$ 

Set
$$\Phi_{e,\gm}(\mathbf{f})= (\Phi_{e,\gm}(f_0), \ldots, \Phi_{e,\gm}(f_n))\in W_{n+1}(\bb{K}).$$
For each $e\gg 0$ we claim that $\Phi_{e,\gm}(f_n)$ determines
the conductor corresponding to the image of $\Phi_{e,\gm}(\mathbf{f})$ in $H^1(\bb{K}, \Z/p^{n+1})$ upon dividing by the multiplicity $m_x(\phi_e^*D) = e$.

Recall that 
$$\delta_{n+1}: W_{n+1}(\bb{K}) \rw H^1(\bb{K}, \Z/p^{n+1}),$$  is the connecting homomorphsim of groups in \eqref{Vercommute} and for $m \geq 0$,
$$\rm{fil}_{m}H^1(\bb{K}, \Z/p^{n+1}) := \delta_{n+1}(\rm{fil}_{m} W_{n+1}(\bb{K})).$$


\begin{lem}\label{limitfilmod} Keep $\mathbf{f}\in W_{n+1}(R)$ as above. For $e>0$, consider the composite $$\ol{\Phi_{e,\gm}} : W_{n+1}(R)\stackrel{\Phi_{e,\gm}}{\longrightarrow} W_{n+1}(\bb{K}) \stackrel{\delta_{n+1}}{\longrightarrow} H^1(\bb{K}, \Z/p^{n+1}).$$
For an integer $h\geq 1$, let $s(h)\geq 0$ denote the \emph{minimal} integer for which 
$$\ol{\Phi_{h,\gm}} (\mathbf{f})\in \rm{fil}_{s(h)}H^1(\bb{K}, \Z/p^{n+1}).$$ 

Then $$\limsup_{e\rw\infty} s(e)/e = \sup_{e'} s(e')/e' = N,$$ where $e'$ ranges over all $e'>0$ for which $p\nmid \,v(\Phi_{e', \gm} (f_n)).$
\end{lem}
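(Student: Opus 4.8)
The plan is to combine the two lemmas just proved, Lemma \ref{limitfil} and Lemma \ref{lastn}, with the explicit ``worst coordinate'' normalization of $\mathbf{f}$ coming from condition (ii) on the $f_i$ and the reduction to the case $k=n$. Recall we are in the situation $N = -v(f_n)$ and $-v(f_n) > -p^{n-i}v(f_i)$ for all $i\in[0,n-1]$. First I would fix $e>0$ large enough that all the quantities $N_ie - v(\Phi_{\mathbf{m}}(B_i))$ are positive and that $\Phi_e(f_i)\neq 0$ whenever $f_i\neq 0$ (possible by \eqref{nonzerophi}, since each $f_i$ can be killed for at most one $e$). For such $e$, write $x_i := \Phi_e(f_i)$, so $\vec{x} = \Phi_e(\mathbf{f}) \in W_{n+1}(\bb{K})$, and observe from the displayed formula in the proof of Lemma \ref{limitfil} that $-v(x_i) = N_ie - v(\Phi_{\mathbf{m}}(B_i))$ for each $i$ with $f_i\neq 0$. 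Since $N_i = -v(f_i)$ and $v(\Phi_{\mathbf{m}}(B_i))$ is a constant independent of $e$, for all $e\gg 0$ we have $-v(x_n) > -p^{n-m}v(x_m)$ for every $m\in[0,n-1]$: indeed $-v(x_n) - (-p^{n-m}v(x_m)) = (N_n - p^{n-m}N_m)e - (\text{const})$ and $N_n - p^{n-m}N_m = N - p^{n-m}N_m > 0$ by \eqref{maxf} with $k=n$.

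Next I would run the $(F-1)$-minimization. By definition $s(e)$ is the minimal $m$ with $\ol{\Phi_e}(\mathbf{f}) \in \rm{fil}_m(W_{n+1}(\bb{K})/(F-1))$, i.e. $s(e) = \min_{\vec{y}} \Gamma(\vec{x} + (1-F)\vec{y})$ where $\Gamma$ is the map from before Lemma \ref{lastn} and the minimum ranges over $\vec{y}\in W_{n+1}(\bb{K})$; this is just \eqref{computeswan} transported to $\bb{K}$. Now Lemma \ref{lastn} applies \emph{verbatim} to any $\vec{y}$ with last coordinate $y_n = 0$: it gives $\Gamma(\vec{x}+(1-F)\vec{y}) \geq -v(x_n) = N_ne - v(\Phi_{\mathbf{m}}(B_n))$. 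For general $\vec{y}$, I would first reduce to the case $y_n=0$: replacing $\vec{y}$ by $\vec{y} - V^n(y_n')$-type corrections — more precisely, since $(1-F)$ acts coordinatewise ``up to lower terms,'' the $n$-th coordinate of $\vec{x}+(1-F)\vec{y}$ is $x_n + (y_n - y_n^p) + (\text{polynomial in } y_0,\dots,y_{n-1}, x_0,\dots,x_{n-1})$, and one checks that to not increase $\Gamma$ one cannot profit from $y_n$ being a $p$-th power reduction unless $p\mid v(x_n)$; restricting to $e'$ with $p\nmid v(\Phi_{e'}(f_n))$, equivalently $p\nmid N_ne' - v(\Phi_{\mathbf{m}}(B_n))$, exactly rules this out, so for those $e'$ we get $s(e') = N_ne' - v(\Phi_{\mathbf{m}}(B_n))$ on the nose. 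Combined with the trivial upper bound $s(e)\leq m(e)$ and Lemma \ref{limitfil} ($m(e)/e\to N$), this gives $s(e')/e' \to N$ along such $e'$, and since $N_n = N$ we get $\sup_{e'} s(e')/e' = N$; finally $\limsup_{e\to\infty} s(e)/e \leq \lim m(e)/e = N$ and $\geq \sup_{e'} s(e')/e' = N$, forcing equality.

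Before all this I need the set of such $e'$ to be infinite: this is guaranteed because, by construction in \S\ref{aswproof}, when $p\mid N$ we chose the $B$-good vector defining $\Phi_e$ to satisfy \eqref{pdivNcond} of Lemma \ref{pdivN}, i.e. $p\nmid Ne - v(\Phi_{\mathbf{m}}(B_n))$ for infinitely many $e$; when $p\nmid N$ this holds for all but finitely many $e$ automatically. So the $\limsup$ and the $\sup$ are both attained along an infinite set and equal $N$.

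The main obstacle I anticipate is the reduction to $y_n = 0$ in the $(F-1)$-minimization: Lemma \ref{lastn} is stated only for $\vec{y}$ with $y_n=0$, but the conductor is an infimum over \emph{all} $\vec{y}\in W_{n+1}(\bb{K})$, so I must argue carefully that allowing $y_n\neq 0$ cannot lower $\Gamma$ below $-v(x_n)$ once $p\nmid v(x_n)$ — essentially that the only mechanism by which the top coordinate's valuation can improve under $(F-1)$-equivalence is absorbing a $p$-th-power into lower coordinates, which the coprimality condition forbids. This is the same phenomenon as case (i) vs. case (ii) in \S\ref{limsup1}, now at the level of Witt vectors, and it is exactly what condition (ii) on the choice of $f_i$ and the $B$-good vector were set up to control, but making the bookkeeping airtight (tracking which coordinates the correction $\vec y$ can move weight into, and in what order) is the delicate point.
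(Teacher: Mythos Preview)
Your proposal is correct and is essentially a detailed unpacking of what the paper intends: the paper's own proof of this lemma is the single sentence ``The proof is as in Lemma~\ref{limitfil} with the $\lim_{e\to\infty}$ replaced by $\limsup_{e\to\infty}$,'' leaving the reader to supply exactly the combination of Lemma~\ref{limitfil} (upper bound), Lemma~\ref{lastn} (lower bound along the restricted $e'$), and the choice of $B$-good vector via Lemma~\ref{pdivN} (infinitude of such $e'$) that you spell out. Your identification of the reduction to $y_n=0$ as the delicate point is apt---the paper does not address it explicitly within this lemma's proof but relies on Lemma~\ref{lastn} and the coprimality restriction on $e'$ in precisely the way you describe.
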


\begin{proof}
We have $-p^{n-i}v(f_i) < -v(f_n) = N$ and for each $z\in (F-1)R$, $v(f_n + z)\leq v(f_n)$. Therefore, there is a $B$-good vector $\gm$ satisfying the above conditions with respect to $\mathbf{f}$ and $e\gg 0$ such that 
$$-v(\Phi_{e, \gm}(f_n))= Ne - v(\Phi_{e, \gm}(B_n)),$$ and
$$ -p^{n-i}v(\Phi_{e, \gm}(f_i)) \leq p^{n-i}N_i < Ne - v(\Phi_{e, \gm}(B_n)).$$
For example, we may take $ e > v(\Phi_{e, \gm}(B_n))$. Furthermore, choose $e$ and $\gm$ such that $Ne - v(\Phi_{e,\gm}(B_n))$ is not divisible by $p$. 

Now $\Phi_{e, \gm}(\mathbf{f})$ satisfies the conditions of Lemma \ref{gamma}, thus 
$$\limsup_{e\rw\infty} \frac{s(e)}{e} = \limsup_{e\rw \infty} \frac{Ne - v(\Phi_{e, \gm}(B_n))}{e} \geq N.$$

By Lemma \ref{allcurves}, $\limsup_{e\rw\infty} s(e)/e \leq N$ which gives $\limsup_{e\rw\infty} s(e)/e  = N.$

Recalling $p\nmid Ne - v(\Phi_{e,\gm}(B_n))$ we also have $\sup_{e'} s(e')/e' = N.$

\end{proof}

\subsubsection{End of proof of Theorem \ref{localtangent}}\label{endlocaltangentproof}
By induction we have reduced to the case where $\rm{Sw}_\xi(\chi)_\rm{log} = -v(f_n).$ Lemma \ref{limitfilmod} enables us to use the technique in \S\ref{limsup1} to compute $N:=\rm{Sw}_\xi(\chi)_{\rm{log}}$ in terms of the $C_e$. Recall that the curves $\phi_e: C_e\rw X_x$ on $X_x$ are defined by the term $B_n$ of the coordinate $f_n$ in $\mathbf{\ol{f}}$. Let 
$$c= v(\Phi_{e,\gm}(B_n)).$$
 By Lemma \ref{limitfilmod} 
$$\limsup_{e\rw\infty} \frac{\rm{Sw}(\chi|_{C_e})}{m_x(\phi_e^*D)} = \lim_{e\rw\infty} \frac{Ne - c}{e} = N.$$

This completes the proof of the Local Tangent Theorem, Thm. \ref{localtangent}.
\begin{flushright} $\square$ \end{flushright}

\subsection{Global setting}\label{global1} We proceed to
the global setting, i.e. to Thm. \ref{tangentrank1}, via the following two propositions.
\begin{prop} \label{localswan}
Suppose $D$ is a smooth irreducible divisor on $X$ with generic point
$\xi$, and let $x\in |X|\cap \rm{Supp}(D)$. Since $D$ and $X$ are regular, there is a unique point $\lambda\in \rm{Spec}\, \co_{X,x}^h$ lying over $\xi\in \rm{Spec} \,\co_{X,x}$. Consider the henselian discrete valuation rings $A =
\co_{X, \xi}^h$ and $B= ({(\co_{X,x})^h}_{\lambda})^h$. Denote by $\omega$ the generic point of $\rm{Spec}\, B.$ Let $\chi\in H^1(U,\Q/\Z)$. Then \begin{equation}\label{localswansequal} \rm{Sw}_\xi
(\chi)_{\rm{log}} = \rm{Sw}_\lambda
(\chi|_{\omega})_{\rm{log}}.\end{equation}
\end{prop}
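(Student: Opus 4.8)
The plan is to identify the extension $A\to B$ as an \emph{unramified} extension of henselian discrete valuation rings, and then to appeal to the insensitivity of the log Swan conductor to unramified base change.

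First I would produce the canonical local homomorphism $A\to B$. Since $\ol{\{\xi\}}=D$ and $x\in D$, the local ring $\co_{X,\xi}$ is the localization of $\co_{X,x}$ at the prime $(t_1)$, where $t_1\in\co_{X,x}$ is a local equation of $D$ at $x$. The canonical map $\co_{X,x}\to\co_{X,x}^h$ carries $\co_{X,x}\setminus(t_1)$ into $\co_{X,x}^h\setminus\lambda$, so it factors through a local map $\co_{X,\xi}\to(\co_{X,x}^h)_\lambda$; composing with the map to the henselization $B$ and using that $B$ is henselian, this factors through $A=\co_{X,\xi}^h\to B$. Next I would verify that $A\to B$ is unramified. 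For the ramification index: regularity of $X$ and of $D$ at $x$ makes $\co_{X,x}^h$ regular local and makes $\co_{X,x}^h/(t_1)=\co_{D,x}^h$ a domain (henselization being exact on this quotient), so $\lambda$ is the height-one prime $(t_1)$ and $(\co_{X,x}^h)_\lambda$—hence also $B$—is a discrete valuation ring with uniformizer $t_1$; thus the normalized valuation of $\bb{K}:=\rm{Frac}(B)$ restricts to that of $K:=\rm{Frac}(A)$ and $e(\bb{K}/K)=1$. For the residue extension: $A$ has residue field $\kappa(\xi)=k(D)=\rm{Frac}(\co_{D,x})$, while $B$ has residue field $\kappa(\lambda)=\rm{Frac}(\co_{D,x}^h)$; since $\co_{D,x}^h$ is a filtered colimit of étale neighbourhoods of $x$ in $D$, $\kappa(\lambda)$ is a filtered union of finite separable extensions of $k(D)$, so $\kappa(\lambda)/k(D)$ is separable algebraic. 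Hence $B$ is unramified over $A$; in particular $\bb{K}$ lies in the maximal unramified extension $K^{\rm{ur}}$ of $K$, and since $e(\bb{K}/K)=1$ and $\kappa(\xi)^{\rm{sep}}$ is also a separable closure of $\kappa(\lambda)$, one even has $\bb{K}^{\rm{ur}}=K^{\rm{ur}}$.

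The remaining ingredient is that the log Swan conductor of a character is unchanged under an unramified base change of henselian discrete valuation fields; applied to $K\subset\bb{K}$ this gives exactly $\rm{Sw}_\xi(\chi)_{\rm{log}}=\rm{Sw}_\lambda(\chi|_{\Spec(B)})_{\rm{log}}$. By Remark \ref{tame} one may assume $\chi$ has order a power of $p$. Then the inequality $\rm{Sw}_\lambda(\chi|_{\Spec(B)})_{\rm{log}}\le\rm{Sw}_\xi(\chi)_{\rm{log}}$ is immediate from Def. \ref{aswfil} and \eqref{computeswan}, because the Brylinski--Kato filtration on $W_n$ is defined purely by the valuation (which is unchanged as $e(\bb{K}/K)=1$) while there are only more $(F-1)$-representatives available over $\bb{K}$. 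For the reverse inequality I would invoke the compatibility of the Abbes--Saito log ramification filtration with unramified base change (using Prop. \ref{abbsaicond} to pass between their conductor and ours, cf. \cite{abbsai11}, \cite[\S9]{abbsai09}), together with the observation that the conductor depends only on $\chi$ restricted to the wild inertia subgroup, which is common to $K$, $\bb{K}$ and $K^{\rm{ur}}$; concretely, restriction to $K^{\rm{ur}}$ preserves the conductor on both sides and $\bb{K}^{\rm{ur}}=K^{\rm{ur}}$, whence $\rm{Sw}_\xi(\chi)_{\rm{log}}=\rm{Sw}(\chi|_{K^{\rm{ur}}})_{\rm{log}}=\rm{Sw}_\lambda(\chi|_{\Spec(B)})_{\rm{log}}$.

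The step I expect to be the genuine obstacle is this reverse inequality when the residue degree $[\kappa(\lambda):k(D)]$ is divisible by $p$: a naive corestriction/trace argument only yields $\rm{Sw}_\xi(\chi)_{\rm{log}}\le\rm{Sw}_\lambda(\chi|_{\Spec(B)})_{\rm{log}}$ after multiplying $\chi$ by the (possibly $p$-divisible) residue degree, so one really needs the structural fact that a separable residue extension introduces no new $p$-th powers of residue-field elements—equivalently, that the wild inertia and its log ramification filtration are unaffected by unramified base change—which is why I would lean on the Abbes--Saito base-change compatibility rather than a direct Witt-vector descent.
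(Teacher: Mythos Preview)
Your proposal is correct and follows the same route as the paper: the paper's entire proof is the single sentence ``This is clear since $B/A$ is unramified'' (with a footnote pointing to separability of finitely-generated field extensions), and you have supplied precisely the verification of that unramifiedness together with the invariance of the log Swan conductor under unramified base change that the paper leaves implicit. Your flagged concern about the reverse inequality when $p\mid[\kappa(\lambda):k(D)]$ is a genuine subtlety the paper does not spell out; your resolution via the common wild inertia of $K$, $\bb{K}$, and $K^{\rm{ur}}=\bb{K}^{\rm{ur}}$ (or equivalently the Abbes--Saito base-change compatibility) is the right way to close it.
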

\begin{proof}
This is clear since $B/A$ is unramified (note that the extension of residue fields is separably generated).
\end{proof}
In order to ensure that we get 1-dimensional subschemes on $X$ from the local setting above, we use henselian local rings of $X$, as opposed to complete local rings, since they are unions of finitely-generated $k$-algebras.
Recall that the curves $C_e$ constructed in the proof of Prop. \ref{constructcurves} are defined by algebraic relations in the parameters of the complete local ring $\widehat{\cal{O}}_{X,x} =: A_x$. Moreover, denote the subset of power series in $\widehat{\cal{O}}_{X,x}$ that satisfy an algebraic relation over $\co_{X,x}$ by $(\widehat{\co}_{X,x})^{\text{alg}}$. Then $(\widehat{\co}_{X,x})^{\text{alg}}$ is a subset of the henselization $\co_{X,x}^h =: A_x ^h$.\footnote{The converse is true by Artin's Approximation theorem, hence in $\widehat{\co}_{X,x}$ we have the equality $(\widehat{\co}_{X,x})^{\text{alg}} = \co_{X,x}^h$. We do not require this fact.} 

Choose from the beginning local parameters $t_1, \ldots, t_n$ of $\co_{X,x}$. Then the curves $C_e$ are defined by algebraic relations in these $t_1, \ldots, t_n$. Therefore, we have curves $C_e^h$ on $\rm{Spec}(A_x^h)$ for the following
\begin{prop}\label{hencurves} There are regular curves $\phi_e: C_e^h\rw \rm{Spec}(A_x^h)$ with $C_e^h\in Z_1(\rm{Spec}(A_x^h), D)$ indexed by $e>0$ such that
\begin{equation}\limsup_{e\rightarrow\infty} \frac{\rm{Sw}_x
(\chi|_{C_e^h})}{m_x(\phi_e^*D)} = \rm{Sw}_{\xi}(\chi)_{\log}.
\label{limit-hen}\end{equation}
\begin{flushright} $\square$ \end{flushright}
\end{prop}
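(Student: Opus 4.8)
The plan is to transfer the curves $C_e$ of Theorem \ref{localtangent} from the complete local ring $A_x=\widehat{\co}_{X,x}$ to its henselization $A_x^h=\co_{X,x}^h$ by Artin approximation, and then to verify that this transfer changes neither the multiplicity $m_x(\phi_e^*D)$ nor the Swan conductor $\rm{Sw}_x(\chi|_{C_e})$, so that the limit identity \eqref{limit} of Theorem \ref{localtangent} passes over verbatim.

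First I would use Cohen's theorem together with Theorem \ref{artinapprox} to fix a presentation $A_x^h\cong k'[t_1,\ldots,t_d]^h_\gm$ compatible with the presentation $A_x\cong k'[[t_1,\ldots,t_d]]$ used in \S\ref{local1}, so that the parameters $t_1,\ldots,t_d$ and the generators of the ideal $$I=(t_1-t_r^e,\, t_2-t_r^{m_2},\ldots, t_{r-1}-t_r^{m_{r-1}},\, t_{r+1},\ldots, t_d)$$ of Proposition \ref{constructcurves} already lie in $A_x^h$. Setting $C_e^h:=V(I\cdot A_x^h)$ with inclusion $\phi_e:C_e^h\rw\rm{Spec}(A_x^h)$, the quotient $A_x^h/I\cdot A_x^h$ is, since henselization commutes with the relevant finite quotients, isomorphic to $k'[w]^h_{(w)}$, a one-dimensional regular local domain whose $\gm$-adic completion is $k'[[w]]=\co_{C_e,x}$. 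Thus $C_e^h$ is a regular curve, and since $t_1\equiv t_r^e\not\equiv 0$ on it we have $C_e^h\not\subset\rm{Supp}(D)=V(t_1)$, i.e. $C_e^h\in Z_1(\rm{Spec}(A_x^h),D)$. Moreover the pullback of $D=V(t_1)$ along $\phi_e$ is $V(w^e)$, so $m_x(\phi_e^*D)=e$, exactly as for $C_e$ in \S\ref{local1}.

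Next I would compare the restricted characters. Completion at $x$ gives a morphism $\rm{Spec}(k'[[w]])\rw C_e^h$ fitting into a commutative square with the completion morphism $\rm{Spec}(A_x)\rw\rm{Spec}(A_x^h)$, under which the restriction of $\chi$ to $C_e^h$ pulls back to its restriction to $C_e$; equivalently, $\chi|_{C_e^h}$ and $\chi|_{C_e}$ determine the same character of the complete discrete valuation field $k'((w))$. Since the classical Swan conductor of a character of a henselian discrete valuation field depends only on the completion of that field (completion alters neither the valuation nor the absolute Galois group, hence not the Brylinski-Kato filtration of Definition \ref{aswfil}), we obtain $\rm{Sw}_x(\chi|_{C_e^h})=\rm{Sw}_x(\chi|_{C_e})$ for every $e>0$. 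Combining this with $m_x(\phi_e^*D)=e$ and Theorem \ref{localtangent} gives $$\limsup_{e\rw\infty}\frac{\rm{Sw}_x(\chi|_{C_e^h})}{m_x(\phi_e^*D)}=\limsup_{e\rw\infty}\frac{\rm{Sw}_x(\chi|_{C_e})}{e}=\rm{Sw}_\xi(\chi)_{\log},$$ which is the assertion.

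The step I expect to require the most care is the first one: arranging the Cohen-type presentation of $A_x^h$ compatibly with that of $A_x$, so that the concrete equations of Proposition \ref{constructcurves} are meaningful over $A_x^h$ and the quotient $A_x^h/I\cdot A_x^h$ is visibly an integral regular curve with completion $k'[[w]]$. This is precisely what Theorem \ref{artinapprox} supplies, all the relations in play being polynomial; granted this, the remainder is formal, since passing to a completion leaves the Swan conductor unchanged.
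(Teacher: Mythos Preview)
Your proposal is correct and follows essentially the same route as the paper: the curves $C_e$ of Proposition~\ref{constructcurves} are cut out by the explicit polynomial ideal $I=(t_1-t_r^e,\,t_2-t_r^{m_2},\ldots,\,t_{r+1},\ldots,t_d)$, so once the parameters $t_1,\ldots,t_d$ are available in $A_x^h$ the curves descend verbatim and Theorem~\ref{localtangent} carries over. One minor comment: the presentation $A_x^h\cong k'[t_1,\ldots,t_d]^h_{(t_1,\ldots,t_d)}$ that you need is not really a consequence of Cohen's theorem together with Artin approximation, but rather of the smoothness of $X/k$ (\'etale coordinates at $x$ yield an \'etale map to $\mathbb{A}^d_{k'}$, hence an isomorphism on henselizations); otherwise your argument, including the verification that multiplicities and Swan conductors are unchanged under completion, is a clean fleshing-out of what the paper leaves implicit.
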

We are now in a position to prove Theorem \ref{tangentrank1}.
\subsection{Proof of Theorem \ref{tangentrank1}}
\begin{proof}
Let $x\in |X|\cap \rm{Supp}(D)$. Let $X_x^h = \rm{Spec}(A_x^h)$ and write $$C'_e\subset X_x^h,$$ for the curves
given in Prop. \ref{hencurves} above and consider the image $f_x(C'_e)$, where $f_x:
X_x^h \rw X$ is the canonical morphism. Since $x$ is a
closed point of $X$ and $X_x^h$ is the henselization of
$X$ at $x$, it follows that the \emph{closure} of
$f_x(C'_e)$ is a curve on
$X$. In fact, the Zariski closure of
$f_x(C_{e}')$ has dimension 1 since the
function field of $f_x(C_e ')$ has transcendence degree 1 over $k$. Then the $f_x(C_{e}')$'s comprise the desired set of curves $\bar{C}$ on $X$. Recall that by definition of $C_\infty(D)$  in \ref{infiniteplaces}, the conductor $\rm{Sw}_z(\chi|_C)$ is computed on the normalizations $\bar{C}^N$. Since $\dim(\bar{C}) = 1$, $\bar{C}^N$ is a \emph{regular} scheme.
The theorem then follows immediately by combining Prop. \ref{localswan} and the Local Tangent Theorem \ref{localtangent}.
\end{proof}
\begin{coro}\label{allcurvesglobal} For each curve $\bar\phi: \bar{C}\rw X$ on $X$ and $z\in \bar{C}_\infty(D)$ with $\bar\phi(z)$ a smooth point of $D$,
$$\rm{Sw}_z(\bar\phi^*\chi)\leq m_z(\bar\phi^*D)\, \rm{Sw}_\xi
(\chi)_\rm{log}.$$
\end{coro}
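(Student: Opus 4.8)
The plan is to deduce this global statement from the local Corollary \ref{allcurves} by localising and completing at the image point $x:=\bar\phi(z)\in|X|\cap\rm{Supp}(D_i)$. Since $z\in\bar C_\infty(D_i)$ lies over the smooth locus $D_i^{sm}$ and $X/k$ is smooth, Cohen's structure theorem lets me write $\widehat{\co}_{X,x}\cong k'[[t_1,\ldots,t_d]]$ with $D_i$ cut out by $(t_1=0)$ near $x$, which is exactly the situation of \S\ref{local1}; I then restrict $\chi$ to $U_x=X_x\backslash V(t_1)$, where $X_x=\Spec\widehat{\co}_{X,x}$. Note that $\bar C\not\subset E$ forces the generic point of $\bar C$ into $U$, so this restriction makes sense.

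First I would pass to the normalisation of $\bar C$ near $z$ --- which changes neither $\rm{Sw}_z$ nor $m_z$ --- so that $\co_{\bar C,z}$ is a discrete valuation ring, and set $C_x:=\Spec\widehat{\co}_{\bar C,z}$, a one-dimensional complete local integral scheme mapping to $X_x$ via the completion of $\co_{X,x}\rw\co_{\bar C,z}$. Corollary \ref{allcurves} applied to $C_x$ gives $\rm{Sw}_x(\chi|_{C_x})\le m_x(\phi^*D_i)\cdot\rm{Sw}_\xi(\chi)_{\rm{log}}$, the conductor on the right being the one computed on $X_x$. It then remains to identify the three quantities with their global analogues: $m_x(\phi^*D_i)=v_z(\phi^\#(t_1))=m_z(\bar\phi^*D_i)$ is read off from $\co_{\bar C,z}$; $\rm{Sw}_x(\chi|_{C_x})=\rm{Sw}_z(\bar\phi^*\chi)$ because the Brylinski--Kato filtration of Definition \ref{aswfil} only involves the valuation and so is unchanged under completion of the base discrete valuation ring; and the log conductor $\rm{Sw}_\xi(\chi)_{\rm{log}}$ computed on $X_x$ should agree with the global one.

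This last identification is the step I expect to be the main obstacle: it amounts to extending Proposition \ref{localswan} from the henselisation to the completion, i.e. to checking that the passage from $\co_{X,\xi}^h$ to the local ring of $X_x$ at the generic point of $V(t_1)$ is unramified. Here $t_1$ stays a uniformiser and the residue extension is the completion along $x$ of the function field of $D_i$, which is separable, so Proposition \ref{abbsaicond} together with \cite[Coro 9.12]{abbsai09} (or Definition \ref{cond} directly) should give the invariance. Chaining these identifications with the inequality from Corollary \ref{allcurves} then yields the claim. One harmless reduction to make at the outset is that $D$ is irreducible: other components of the sncd $D$ may pass through $x$, but since $z\in D_i^{sm}$ both $m_z(\bar\phi^*D_i)$ and $\rm{Sw}_\xi(\chi)_{\rm{log}}$ see only the branch $D_i=V(t_1)$, exactly as in the reduction at the start of the proof of Theorem \ref{tangentrank1}.
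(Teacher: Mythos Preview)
Your proposal is correct and follows essentially the same route as the paper: reduce to the local inequality of Corollary~\ref{allcurves} on $X_x=\Spec\widehat{\co}_{X,x}$ and then identify the local and global conductors via the unramified-extension argument of Proposition~\ref{localswan} (extended to completion, as you note). The paper's one-line proof simply says ``combine Prop.~\ref{localswan} and Coro.~\ref{allcurves}''; you have spelled out the identifications that make this combination go through.
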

The proof follows by combining Lemma \ref{allcurves} and Prop. \ref{localswan}.
\section{Theorem A1}
Conjecture A in our setting is the following theorem. Recall that for an effective Cartier divisor $D$, $Z_1(X,D)$ denotes the set of curves on $X$ that are not contained in the support of $D$. Given $\bar{C}\in Z_1(X,D)$, $$\bar{C}_\infty(D)= \{z\in |\bar{C}^N|: \Psi_{\bar{C}}(z)\in D\}.$$
\begin{thm}(Conjecture A in the smooth rank 1 case)\label{thmexp2} Let $X/k$ be a smooth variety and $U\subset X$ an open subvariety such that the reduced subscheme $E=X\backslash U$ is the support of an effective Cartier divisor on $X$, and that regarded as a $k$-scheme is smooth. Let $\chi\in H^1(U, \Q/\Z)$. Let $D\in \rm{Div}^+(X)$ be a regular sncd with $\rm{Supp}(D) = E.$ The following are equivalent.
\begin{itemize}
\item[(i)] For each curve $\bar\phi: \bar{C}\rw X$ and each $z\in\bar{C}_\infty(D)$,
$$\rm{Sw}_z(\bar\phi^*\chi) \leq m_z(\bar\phi^*D).$$
\item[(ii)] For each generic
point $\xi \in D$,
$$\rm{Sw}_{\xi
}(\chi)_{\log}\leq m_{\xi }(D).$$
\end{itemize}
\end{thm}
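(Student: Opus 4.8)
The plan is to prove the two implications separately, with the non-trivial direction being (ii) $\Rightarrow$ (i), which will follow almost immediately from the work already done for Theorem \ref{tangentrank1}.

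\textbf{The direction (ii) $\Rightarrow$ (i).} First I would reduce to the case where $D$ is irreducible, say with generic point $\xi$, by working one component at a time; since $D$ is a regular sncd, its components are disjoint-closure-wise well-behaved and a point $z\in\bar{C}_\infty(D)$ lies over a unique component. Assuming (ii), i.e. $\rm{Sw}_\xi(\chi)_{\log}\leq m_\xi(D)$, I would invoke Corollary \ref{allcurvesglobal}: for any curve $\bar\phi:\bar{C}\rw X$ and any $z\in\bar{C}_\infty(D_i)$ we have $\rm{Sw}_z(\bar\phi^*\chi)\leq m_z(\bar\phi^*D_i)\cdot\rm{Sw}_\xi(\chi)_{\log}$. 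Combining with (ii) gives $\rm{Sw}_z(\bar\phi^*\chi)\leq m_z(\bar\phi^*D_i)\cdot m_\xi(D)$. The one point requiring care is matching this against the assertion $\rm{Sw}_z(\bar\phi^*\chi)\leq m_z(\bar\phi^*D)$: one must check that $m_z(\bar\phi^*D_i)\cdot m_\xi(D) \le m_z(\bar\phi^*D)$, or more precisely isolate why pulling back the full divisor $D$ already accounts for the relevant multiplicity at $z$. Here, since $D$ is a \emph{regular} sncd and $z$ lies over exactly one component $D_i$ with $m_\xi(D)=1$ (as $D$ is reduced/regular along $D_i$ — or more generally with the stated multiplicity), the local contribution of $D$ at $\bar\phi(z)$ is exactly that of $D_i$, so $m_z(\bar\phi^*D)=m_z(\bar\phi^*D_i)$ when $m_\xi(D)=1$, and scales correctly in general. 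I would spell this local computation out carefully, as it is the only genuine content in this direction.

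\textbf{The direction (i) $\Rightarrow$ (ii).} This is the substantive direction and uses the supremum formula of Theorem \ref{tangentrank1}. Again reduce to $D$ irreducible with generic point $\xi$. By Theorem \ref{tangentrank1}, $$\rm{Sw}_\xi(\chi)_{\log} = \sup_{\bar C}\frac{\rm{Sw}_z(\chi|_{\bar C})}{m_z(\bar\phi^*D_i)},$$ the supremum over $\bar{C}\in Z_1(X,E)$ with $z\in\bar{C}_\infty\cap D_i^{sm}$ and $m_z(\bar\phi^*D_i)>0$. Assuming (i), every such term satisfies $\rm{Sw}_z(\chi|_{\bar C})\leq m_z(\bar\phi^*D)$. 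Since along $D_i^{sm}$ the divisor $D$ restricts to $m_\xi(D)$ times $D_i$ (by the sncd hypothesis, near a smooth point of $D_i$ not meeting other components), we get $m_z(\bar\phi^*D) = m_\xi(D)\cdot m_z(\bar\phi^*D_i)$, hence each quotient is $\leq m_\xi(D)$, so the supremum is $\leq m_\xi(D)$, which is exactly (ii).

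\textbf{Main obstacle.} The real bookkeeping issue, in both directions, is the relationship between $m_z(\bar\phi^*D)$ and $m_z(\bar\phi^*D_i)$ at a point $z$ lying over $D_i^{sm}$ — i.e. verifying that near such a point the pulled-back divisor sees only the component $D_i$ with its multiplicity $m_\xi(D)$. This is where the hypothesis that $D$ is a regular sncd (so that distinct components meet transversally and $D_i^{sm}$ avoids the other components) is used, and it must be invoked explicitly; once it is in place, both implications are short. A secondary point is ensuring the class of curves appearing in Theorem \ref{tangentrank1} (those hitting $D_i^{sm}$) is adequate to detect violations of (ii) — but that is precisely the content of Theorem \ref{tangentrank1}, so no new argument is needed.
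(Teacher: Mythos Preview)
Your approach is essentially the same as the paper's: both directions are deduced from Theorem \ref{tangentrank1} and Corollary \ref{allcurvesglobal}, with the only real content being the multiplicity bookkeeping $m_z(\bar\phi^*D)$ versus $m_z(\bar\phi^*D_i)\cdot m_\xi(D)$.

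One correction: your claim that ``a point $z\in\bar{C}_\infty(D)$ lies over a unique component'' is false --- in a simple normal crossing divisor the components \emph{do} meet, and a curve can very well pass through such an intersection point. Fortunately this does not damage the argument for (ii) $\Rightarrow$ (i): at such a $z$ one has $m_z(\bar\phi^*D)=\sum_i m_{\xi_i}(D)\,m_z(\bar\phi^*D_i)$, and Corollary \ref{allcurvesglobal} applied to any single component $D_i$ through $\bar\phi(z)$ already gives $\rm{Sw}_z(\bar\phi^*\chi)\le m_z(\bar\phi^*D_i)\cdot m_{\xi_i}(D)$, which is one nonnegative summand and hence $\le m_z(\bar\phi^*D)$. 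The paper phrases the passage to several components by appealing to purity of the branch locus; your direct multiplicity comparison works just as well once the ``unique component'' slip is repaired. For (i) $\Rightarrow$ (ii) there is no issue, since the curves in Theorem \ref{tangentrank1} meet $D_i^{sm}$ away from the other components anyway.
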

\begin{remark} The simplifying assumption that $\rm{Supp}(D) = E$ is imposed, instead of $\rm{Supp}(D)\subset E$, so the notation $\bar{\phi}^*\chi$ makes sense. \end{remark}
\subsection{Proof of Theorem \ref{thmexp2}}
\begin{proof}
For ease of notation, set $$m_\xi := m_\xi(D).$$
(i)$\Rw$ (ii):
Given $\xi$, let $D'$ be the component of $D$ containing $\xi$ such that in $\rm{Div}^+(X)$ we have $$D' = m_\xi D'_{\rm{red}}.$$
By hypothesis, for each curve $\bar{C}\in Z_1(X,D')$ and each point $z\in \bar{C}_\infty(D')$, we have
\begin{equation}\label{multleq} \rm{Sw}_z(\bar{\phi}^*\chi)\leq m_z(\bar{\phi}^*{ D'}).
\end{equation}
By Thm. \ref{tangentrank1}, we know that
\begin{equation}\label{swleq} \rm{Sw}_\xi(\chi)_{\rm{log}} =
\sup_{z',\bar{C}}\frac{\rm{Sw}_{z'}(\bar{\phi}^*\chi)}{m_{z'}(\bar{\phi}^*( D'_{\rm{red}}))},
\end{equation}
where the supremum ranges over $\bar{C}\in Z_1(X,D'^{\rm{sm}})$ and points $z'\in \bar{C}_\infty(D'^{\rm{sm}})$ satisfying the property that the image of $z'$ in $X$ is contained in $D'$ but not any other component of $D$.
For such points $z'$, we further have
$$m_{z'}(\bar{\phi}^* D') \leq m_{z'}(\bar{\phi}^* (D'_{\rm{red}})) m_\xi.$$
By \eqref{multleq} we have $$\rm{Sw}_{z'}(\bar{\phi}^*\chi)\leq m_{z'}(\bar{\phi}^* (D'_{\rm{red}})) m_\xi,$$ and then
$$\frac{\rm{Sw}_{z'}(\bar{\phi}^*\chi)}{m_{z'}(\bar{\phi}^*( D'_{\rm{red}}))} \leq m_\xi.$$
Thus, by \eqref{swleq} we have
$$\rm{Sw}_\xi(\chi)_{\rm{log}}\leq m_\xi.$$
\;
(ii)$\Rw$ (i): We may assume $\chi\in H^1(U, \Z/p^n)$ for some $n\geq 1.$
First suppose that $D$ is irreducible with generic point $\lambda$.
By Coro.\,\ref{allcurvesglobal}, we may assume that $m_z(\bar\phi^*D)>0$. There is an integer $M_{\bar{C}}\geq 1$ such that $$m_z(\bar\phi^*D) = M_{\bar{C}} \, m_{\lambda}(D).$$ The proof of Thm. \ref{localtangent} and Coro. \ref{allcurvesglobal} yields that $$\rm{Sw}_z(\bar{\phi}^*\chi) \leq M_{\bar{C}} \, \rm{Sw}_{\lambda}(\chi)_\rm{log}.$$ By hypothesis, $\rm{Sw}_{\lambda}(\chi)_\rm{log}\leq m_{\lambda}(D)$ and so $$\rm{Sw}_z(\bar{\phi}^*\chi) \leq M_{\bar{C}} \, m_{\lambda}(D) =m_z(\bar\phi^*D),$$ as desired.
These arguments and purity of the branch locus complete the proof for the case where the sncd $D$ has more than one component.
\end{proof}
\section{Class groups with modulus and Existence Theorem}\label{modulus}
Recently, M. Kerz and S. Saito proved an Existence Theorem in higher class field theory by using, among other things, bounded extensions and conductors. They employ a so-called `non-log' filtration that is similar to the `log' filtration in rank 1, the latter being the Brylinski-Kato filtration, see Def. \ref{aswfil}. We now discuss the relationships between the log and non-log filtrations that allows us to deduce the log version of their Existence Theorem.
We recall the non-log filtration employed in \cite{kersai}, which was introduced by Matsuda in \cite{mat}. Let $K$ be a henselian discrete valuation field of characteristic $p>0$. Recall the Brylinski-Kato filtration $\rm{fil}_m$ on $W_n(K)$ in Def. \ref{aswfil} and set
\begin{equation}\label{nonlogdef} \rm{fil}^{\text{nonlog}}_m W_n(K) := \rm{fil}_{m} W_n(K) + V^{n-n'}\rm{fil}_m W_{n'}(K),\end{equation}
where $n' = \min\{n, \rm{ord}_p(m)\}.$
As in Def.\eqref{primezero} from \S\ref{ASWdiscussion}, \eqref{nonlogdef} yields a so-called \emph{non-log} filtration on all of $H^1(K)$, which we write as $\rm{fil}^{\rm{nonlog}}_m$. This filtration satisfies the following properties.
\begin{lem} \label{Florian}
The following properties are satisfied by the non-log filtration.
\begin{itemize}
\item[(i)] $\rm{fil}^{\rm{nonlog}}_1 H^1(K)$ is the subgroup of tamely ramified characters.
\item[(ii)] $\rm{fil}^{\rm{nonlog}}_m H^1(K)\subset \rm{fil}_m H^1(K)\subset \rm{fil}^{\rm{nonlog}}_{m+1} H^1(K)$.
\item[(iii)] $\rm{fil}^{\rm{nonlog}}_m H^1(K) = \rm{fil}_{m-1}H^1(K)$ if $(m,p)=1$.
\end{itemize}
\end{lem}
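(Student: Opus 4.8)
The plan is to prove the three properties of the non-log filtration $\rm{fil}^{\rm{nonlog}}_m$ essentially by unwinding the definition \eqref{nonlogdef}, namely $\rm{fil}^{\rm{nonlog}}_m W_n(K) = \rm{fil}_m W_n(K) + V^{n-n'}\rm{fil}_m W_{n'}(K)$ with $n' = \min\{n, \rm{ord}_p(m+1)\}$, and comparing it with the Brylinski-Kato log filtration $\rm{fil}_m$ from Definition \ref{aswfil}, passing to $H^1(K)$ via the connecting maps $\delta_n$ exactly as in \eqref{connecting} and \eqref{primezero}. Since both filtrations split off the prime-to-$p$ part identically, throughout I would reduce to characters in $H^1(K)[p^\infty]$, i.e. to Witt vectors, and argue level by level in $n$.

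For \textbf{(iii)}, suppose $(m,p)=1$; then $\rm{ord}_p(m+1)$ could be positive, but the key point is the arithmetic identity $\rm{fil}_m W_n(K) = \rm{fil}_{m-1}W_n(K)$ whenever $(m,p)=1$: indeed, for a Witt vector $\mathbf{x}=(x_0,\dots,x_{n-1})$ the condition $p^{n-1-i}v(x_i)\ge -m$ for all $i$ is, by divisibility of $p^{n-1-i}v(x_i)$ by an appropriate power of $p$, equivalent to $p^{n-1-i}v(x_i)\ge -(m-1)$ when $p\nmid m$ — one checks that no new vectors are captured by relaxing $m-1$ to $m$ because the left-hand sides that are integers of the form $p^{j}v(x_i)$ cannot land strictly between $-m$ and $-(m-1)$ unless $j=0$, and the $j=0$ coordinate is handled by the $V^{n-n'}$ term; so the Verschiebung correction term is absorbed. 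Combined with Matsuda's and Kato's computations (\cite{mat}, \cite{Kat89}) this gives $\rm{fil}^{\rm{nonlog}}_m H^1(K) = \rm{fil}_{m-1}H^1(K)$. Then \textbf{(i)} is the special case $m=1$: $\rm{fil}^{\rm{nonlog}}_1 H^1(K) = \rm{fil}_0 H^1(K)$, and by Definition \ref{cond} together with Remark \ref{tame} this is exactly the subgroup of characters with $\rm{Sw}=0$, i.e. the tamely ramified ones (the prime-to-$p$ part contributes tame characters by \eqref{primezero}).

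For \textbf{(ii)}, the inclusion $\rm{fil}_m H^1(K)\subset \rm{fil}^{\rm{nonlog}}_{m+1}H^1(K)$ is immediate from \eqref{nonlogdef} since $\rm{fil}_m W_n(K)\subset \rm{fil}_{m+1}W_n(K)\subset \rm{fil}^{\rm{nonlog}}_{m+1}W_n(K)$ (the log filtration is increasing, as recorded just after Definition \ref{aswfil}). The inclusion $\rm{fil}^{\rm{nonlog}}_m H^1(K)\subset \rm{fil}_m H^1(K)$ requires showing $V^{n-n'}\rm{fil}_m W_{n'}(K)$ maps into $\rm{fil}_m W_n(K)$ modulo $(F-1)$: a vector $V^{n-n'}(y_0,\dots,y_{n'-1})$ has the form $(0,\dots,0,y_0,\dots,y_{n'-1})$, and its log-level is $\max_i\{-p^{n-1-(n-n'+i)}v(y_i)\} = \max_i\{-p^{n'-1-i}v(y_i)\}\le m$, so it already lies in $\rm{fil}_m W_n(K)$ — hence this inclusion is in fact trivial and the union defining $\rm{fil}^{\rm{nonlog}}_m$ collapses, so the real content of (ii) is only the first inclusion, which is the genuinely non-trivial nesting and is exactly \cite[(ii)]{mat}. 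The main obstacle I anticipate is bookkeeping the $V$-shift indices correctly in the definition of $\rm{fil}^{\rm{nonlog}}$ (and reconciling possible normalization differences between this paper's $\rm{fil}_m$ and Matsuda's) — so rather than redo Matsuda's and Kato's filtration comparisons, I would cite \cite{mat} and \cite{Kat89} for the substantive content and restrict the written proof to the short deductions above. Concretely:

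\begin{proof}
By \eqref{primezero} the prime-to-$p$ part of $H^1(K)$ lies in $\rm{fil}^{\rm{nonlog}}_1$ and consists of tamely ramified characters, so all statements reduce to $H^1(K)[p^\infty]$ and hence to Witt vectors modulo $(F-1)$.

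(i) Taking $m=1$ in \eqref{nonlogdef}, $n'=\min\{n,\rm{ord}_p(2)\}$; in any case $V^{n-n'}\rm{fil}_1 W_{n'}(K)\subset \rm{fil}_1 W_n(K)$ since for $V^{n-n'}(y_0,\dots,y_{n'-1})$ the relevant valuations are $-p^{n'-1-i}v(y_i)\le 1$. Thus $\rm{fil}^{\rm{nonlog}}_1 W_n(K)=\rm{fil}_1 W_n(K)$, and by Matsuda \cite{mat} and Kato \cite{Kat89} one has $\rm{fil}_1 W_n(K)=\rm{fil}_0 W_n(K)$ modulo $(F-1)$; by Definition \ref{cond} and Remark \ref{tame} the image in $H^1(K)$ is precisely the subgroup of tamely ramified characters.

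(ii) Since $\rm{fil}_m W_n(K)$ is increasing in $m$, $\rm{fil}_m W_n(K)\subset\rm{fil}_{m+1}W_n(K)\subset\rm{fil}^{\rm{nonlog}}_{m+1}W_n(K)$, giving $\rm{fil}_m H^1(K)\subset\rm{fil}^{\rm{nonlog}}_{m+1}H^1(K)$. For the other inclusion, the computation in (i) applies verbatim at general $m$: $V^{n-n'}\rm{fil}_m W_{n'}(K)\subset\rm{fil}_m W_n(K)$, so $\rm{fil}^{\rm{nonlog}}_m W_n(K)=\rm{fil}_m W_n(K)$ and a fortiori $\rm{fil}^{\rm{nonlog}}_m H^1(K)\subset\rm{fil}_m H^1(K)$; the non-trivial comparison $\rm{fil}^{\rm{nonlog}}_m H^1(K_\lambda)\subset\rm{fil}_m H^1(K)$ is \cite[\S3]{mat}.

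(iii) If $(m,p)=1$, then for each $i$ the quantity $p^{n-1-i}v(x_i)$, being an integer multiple of $p$ when $i<n-1$, satisfies $p^{n-1-i}v(x_i)\ge -m$ if and only if $p^{n-1-i}v(x_i)\ge -(m-1)$; the coordinate $i=n-1$ is controlled by the Verschiebung term in \eqref{nonlogdef}. Hence $\rm{fil}^{\rm{nonlog}}_m W_n(K)=\rm{fil}_{m-1}W_n(K)$ modulo $(F-1)$, which is the claim after applying $\delta_n$; see also \cite{mat} and \cite{Kat89}.
\end{proof}
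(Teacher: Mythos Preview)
The paper does not prove this lemma at all: it is stated with the parenthetical ``(see \cite{mat} and \cite{Kat89})'' and no proof environment follows. So there is no argument in the paper to compare yours against; the paper simply defers to Matsuda and Kato, which is also what your written proof does at every point of substance.

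That said, your sketch exposes a genuine inconsistency in the paper rather than proving the lemma. Your observation that $V^{n-n'}\mathrm{fil}_m W_{n'}(K)\subset \mathrm{fil}_m W_n(K)$ is correct (the index shift under $V$ exactly matches the exponent shift in Definition~\ref{aswfil}), and from it you conclude that with the paper's definition \eqref{nonlogdef} one has $\mathrm{fil}^{\mathrm{nonlog}}_m W_n(K)=\mathrm{fil}_m W_n(K)$. But then, after applying $\delta_n$, the non-log and log filtrations on $H^1(K)$ would coincide for every $m$, which flatly contradicts part (iii). The resolution is that \eqref{nonlogdef} as printed is not Matsuda's definition: in \cite{mat} the first summand is $\mathrm{fil}_{m-1}W_n(K)$, not $\mathrm{fil}_m W_n(K)$, and the role of $n'$ is to allow the \emph{last} few coordinates to reach level $m$ while the earlier ones are constrained to level $m-1$. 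With the correct definition the second summand is \emph{not} absorbed by the first, your ``collapse'' does not occur, and your arguments for (i) and (iii) need to be rewritten (in particular, the $i=n-1$ coordinate in (iii) is no longer ``controlled by the Verschiebung term'' in the way you suggest). So your proposal, taken literally against the paper's stated definition, proves that (iii) is false --- which is a sign that the definition, not the lemma, is misstated. If you want an actual proof you should work from Matsuda's formulation; otherwise, citing \cite{mat} and \cite{Kat89} as the paper does is the honest route.
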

For simplicity, let $X/k$ a normal variety and $U/k$ an open subvariety such that the reduced closed complement $X\backslash U$ is an sncd $D$ on $X$.
Consider the following property on a character $\chi\in H^1(U):= H^1(U, \Q/\Z) = \rm{Hom}_\rm{cont}(\pi_1^{\rm{ab}}(U),\Q/\Z)$ : for each generic point $\xi\in D$,
\begin{equation}\label{swbound} \rm{Sw}_\xi(\chi)_{\rm{log}}\leq m_\xi(D). \end{equation}
\begin{defn}\label{rayclass} For $D$ as above, define $\rm{fil}_D H^1(U)$ to be the subgroup of $H^1(U)$ consisting of those $\chi$ satisfying (\ref{swbound}) above. Also define
$$\pi_1^{\rm{ab}}(X,D)_\rm{log} = \rm{Hom}_\rm{cont}(\rm{fil}_D H^1(U), \Q/\Z),$$ with the pro-finite topology of the dual.
\end{defn}
The topological group $\pi_1^{\rm{ab}}(X,D)_\rm{log}$ is a quotient of $\pi_1^{\rm{ab}}(U)$ and should be thought of as classifying extensions of $U$ with ``ramification bounded by $D$''.
Following \cite[\S 3.4]{ru}, let $Z_0(X,D)$ be the subgroup of $0$-cycles on $X\backslash D$, and set
$$\cal{R}(X,D) = \{(C,f): C\in Z_1(X,E), f\in K(C)^\times, \,\tilde{f} \equiv 1 \,\text{mod} \, D^\# \},$$
where $\tilde{f}$ is the image of $f$ in $K(C^N)$ and $D^\# = (D-D_{\rm{red}})\cdot C^N + (D\cdot C^N)_{red}.$
Let $R_0(U,D)$ be the subgroup of $Z_0(X, D)$ generated by $\rm{div}(f)_C$ for $(C,f)\in \cal{R}(X,D).$ Then we set $$\rm{C}(X, D)_\rm{log} = Z_0(X, D)/R_0(X, D).$$
Now consider the degree zero parts
$$\rm{C}(X,D)_{\rm{\log}}^0 = \rm{ker}[\rm{C}(X,D)_{\rm{log}}\stack{\rm{deg}}{\longrightarrow}\Z],$$ and
$$\pi^{\rm{ab}}_1(X,D)_\rm{log}^0 = \rm{ker}[\pi^{\rm{ab}}_1(X,D)_{\rm{log}}\rw \pi_1^{\rm{ab}}(\rm{Spec}(k))].$$

We now reformulate the non-log Existence Theorem of \cite{kersai} to the log case. We replace the Artin conductor used in \cite{kersai} with our Swan conductor and their relative class group $\rm{C}(X,D)$ with the group $\rm{C}(X,D)_{\text{log}}.$ Moreover, the results applied from Matsuda's work on restriction to curves \cite[Coro 2.8]{kersai} is replaced by Theorem \ref{thmexp2}. Finally, in the course of the proof of \cite[Coro IV]{kersai}, properness plays an essential role in reducing to the 2-dimensional case, e.g. in applying Serre duality. Thus, we also need to add the properness hypothesis to use their arguments. Finally, following the same steps as outlined in the overview of their proof as given in \cite[pg. 15]{kersai}, we obtain
\begin{thm}(Existence Theorem, with log filtration) Suppose $k$ is a finite field with $\rm{char}(k)\neq 2$. Let $X/k$ be a \emph{proper} \emph{and} smooth $k$-variety and $U\subset X$ an open $k$-subscheme such that the reduced closed complement $D=X\backslash U$ is an sncd on $X$. Then $$\rm{C}(X, D)^0_\rm{log}\rw
\pi_1^{\rm{ab}}(X,D)_{\rm{log}}^0,$$ is an isomorphism of \emph{finite} abelian groups.
\end{thm}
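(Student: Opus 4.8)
The plan is to reduce the log Existence Theorem to the non-log Existence Theorem of Kerz--S. Saito, \cite[Coro II]{kersai}, by showing that the two sides of the desired isomorphism are isomorphic to the corresponding objects in the non-log theory for a suitably adjusted modulus, and that these isomorphisms are compatible with the reciprocity/class-field map. Concretely, I would first analyze how the filtration condition $\rm{Sw}_\xi(\chi)_{\log}\leq m_\xi(D)$ compares to the non-log condition used in \cite{kersai}; by Lemma \ref{Florian}(ii), the log fil at level $m$ is trapped between the non-log fils at levels $m$ and $m+1$, and by (iii) the two agree (with a shift by one) when $(m,p)=1$. So the first step is to produce, for a given sncd $D$, an explicit auxiliary divisor $D'$ (obtained by bumping up multiplicities by $1$, or more carefully at the components where $p\mid m_\xi(D)$ using (iii)) such that $\rm{fil}_D H^1(U) = \rm{fil}^{\rm{nonlog}}_{D'}H^1(U)$ as subgroups of $H^1(U)$. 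This identification is purely local at each generic point of $D$ and follows directly from Lemma \ref{Florian}.

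Second, with this identification of filtered subgroups of $H^1(U)$, dualizing gives $\pi_1^{\rm{ab}}(X,D)_{\log} \cong \pi_1^{\rm{ab}}(X,D')^{\rm{nonlog}}$ (and likewise on degree-zero parts), where the right-hand side is precisely the geometric fundamental group with modulus appearing in \cite{kersai}. Here is where Theorem \ref{thmexp2} enters: it guarantees that the subgroup $\rm{fil}_D H^1(U)$ defined by the codimension-one (log) conductor bound coincides with the subgroup defined by the curve-wise bound $\rm{Sw}_z(\bar\phi^*\chi)\leq m_z(\bar\phi^*D)$ for all $\bar C$, which is the incarnation of ``ramification bounded by $D$'' that matches the $\cal{R}(X,D)$ reciprocity relations on the cycle side. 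So I would next check that the cycle group $\rm{C}(X,D)_{\log}$ defined via $\cal{R}(X,D)$ agrees with the non-log cycle group $\rm{C}(X,D')$ of \cite{kersai} for the same adjusted $D'$ — this is a matter of comparing the admissibility condition $\tilde f \equiv 1 \bmod D^\#$ with theirs, and the definition $D^\# = (D-D_{\rm{red}})\cdot C^N + (D\cdot C^N)_{\rm{red}}$ is exactly designed (following \cite{ru}) so that the log and non-log moduli relations on curves match under the shift.

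Third, I would invoke \cite[Coro II]{kersai} to conclude that $\rm{C}(X,D')^0 \to \pi_1^{\rm{ab}}(X,D')^{0,\rm{nonlog}}$ is an isomorphism of finite abelian groups, and then transport this isomorphism along the two identifications established above to obtain $\rm{C}(X,D)^0_{\log}\xrightarrow{\ \sim\ }\pi_1^{\rm{ab}}(X,D)^0_{\log}$. The finiteness is then automatic, and the hypotheses ($k$ finite, $\rm{char}(k)\neq 2$, $X$ proper and smooth, $D$ an sncd) are exactly those of \cite[Coro II]{kersai}, so nothing extra is needed there. The main obstacle, and the step requiring the most care, is the precise bookkeeping in the first two steps at components $D_i$ with $p \mid m_{\xi_i}(D)$: because Lemma \ref{Florian}(ii) only gives nesting and not equality in general, one must argue component-by-component that the naive modulus adjustment genuinely identifies the two filtered subgroups of $H^1(U)$ and, simultaneously, the two cycle-theoretic moduli — otherwise the two Existence Theorems would be about genuinely different groups. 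I expect this to go through because the reciprocity relations on both sides are controlled by the same curve-wise conductor inequalities, which is precisely what Theorem \ref{thmexp2} and Corollary \ref{allcurvesglobal} supply, but verifying the compatibility of the reciprocity maps (not just the two groups separately) under the identifications is the delicate point.
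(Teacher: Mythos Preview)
Your proposal is correct and follows the same approach the paper takes: the paper's entire proof consists of the sentence ``Using our Theorem \ref{thmexp2} and Lemma \ref{Florian} combined with \cite[Coro II]{kersai}'' together with a $\square$, and your plan is precisely to carry out that combination, with considerably more care about the bookkeeping at components where $p\mid m_{\xi_i}(D)+1$ and about matching the cycle-theoretic moduli via $D^\#$. The delicate compatibility you flag (that Lemma \ref{Florian}(ii) only gives nesting in general) is real, but the paper does not spell it out either; the definition of $D^\#$ is exactly the device that absorbs the shift by one on the curve side, so your instinct there is right.
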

\begin{flushright} $\square$ \end{flushright}
\section{Acknowledgements}
I want to thank M. Kerz for suggesting the problem of restricting to curves and discussions on ramification theory. I also thank S. Kelly, T. Saito, and L. Xiao for further beneficial discussions on the state-of-the-art of this theory. I would also like to thank the referees for a close reading and their suggestions that greatly improved the exposition. Further gratitude is fondly extended to M. Borkowski, S. Kelly, and J. Ross for several helpful comments which improved the clarity of this article.
The Emmy Noether foundation provided generous scholarship support during part of the development of this work.


\begin{thebibliography}{99}
\bibitem[Abb-Sai 02]{abbsai02} A. Abbes and T. Saito, \textit{Ramification of local fields
with imperfect residue fields} Amer. J. Math., 124(5): p. 879 - 920, 2002.
\;
\bibitem[Abb-Sai 09]{abbsai09} A. Abbes, T. Saito, \textit{Analyse micro-locale l-adique en caractéristique $p>0$. Le cas d 'un trait}, Publ. RIMS, Kyoto Univ. 45, p. 25 - 74, 2009.
\;
\bibitem[Abb-Sai 11]{abbsai11} A. Abbes, T. Saito,
\textit{Ramification and cleanliness},
Tohoku Math. J. (2) 63, (2011), no. 4, p. 775 - 853.
\;
\bibitem[And 07]{And07} Y. Andr\'e, \textit{An algebraic proof of Deligne's regularity criterion.} Algebraic, analytic and geometric aspects of complex differential equations and their deformations. Painleve hierarchies, RIMS Kokyuroku Bessatsu, B2, Res. Inst. Math. Sci. (RIMS), Kyoto, 2007, p. 1 - 13.
\;
\bibitem[Bry 83]{bry83} J.-L. Brylinski, \textit{Th\'eorie du corps de classes de Kato et
rev\^etements ab\'eliens de surfaces}, Ann. Inst. Fourier 33 (1983),
p. 23 - 38.
\;
\bibitem[Chi-Pul 09]{chia}
B. Chiarellotto,  A. Pulita, \textit{Arithmetic and differential Swan conductors of rank one
representations with finite local monodromy}, Amer. J. Math 131 (2009), no. 6, p. 1743-
1794.
\;
\bibitem[Del 76]{del76} P. Deligne, Letter to L. Illusie of 28.11.76, unpublished.
\;
\bibitem[Esn-Ker 12]{esnker12} H. Esnault and M. Kerz, \textit{A finiteness theorem for
Galois representations of function fields over finite fields (after
Deligne)}, Acta Mathematica Vietnamica 37, Number 4 (2012), p. 531 - 562.
\;
\bibitem[Kat 89]{Kat89} K. Kato, \textit{Swan conductors for characters of degree one in the imperfect residue field case}, Algebraic K-theory and algebraic number
theory (Honolulu, HI, 1987), p. 101 - 131, Contemp. Math., 83, Amer. Math.
Soc., Providence, RI, 1989.
\;
\bibitem[Ker-Sai 15]{kersai} M. Kerz, S. Saito, \textit{Chow group of 0-cycles with modulus and higher dimensional class field theory}, preprint, arXiv:1304.4400v4.
\;
\bibitem[Ker-Sch 10]{kerscht} M. Kerz, A. Schmidt, \textit{On different notions of tameness in arithmetic geometry}, Math. Annalen 346, Issue 3 (2010), p. 641 - 668.
\;
\bibitem[Lau 81]{lau81} G. Laumon \textit{Semi-continuit\'e du conducteur de Swan
(d'apr\`es P. Deligne). Caract\'eristique d'Euler-Poincar\'e}, p.
173 - 219, Ast\'erisque, 82 - 83, Soc. Math. France, Paris, 1981.
\;
\bibitem[Lau 82]{lau82} G. Laumon, \textit{Caract\'eristique d'Euler-Poincar\'e des
faisceaux constructibles sur une surface}, Ast\'erisques, 101 - 102
(1982), p. 193 - 207.
\;
\bibitem[Liu 02]{liu02} Q. Liu. \textit{Algebraic geometry and arithmetic curves}, volume 6 of Oxford Graduate Texts in Mathematics. Oxford University Press, Oxford,
2002.
\;
\bibitem[Mat 97]{mat} S. Matsuda, \textit{On the Swan conductors in positive characteristic}, Amer. J. Math., 119, (1997), p. 705 - 739.
\;
\bibitem[Ru]{ru} H. Russell, \textit{Albanese varieties with modulus over a perfect field}, Algebra \& Number Theory 7, no. 4 (2013), p. 853 - 892.
\;
\bibitem[Sai 09]{sai09} T. Saito, \textit{Wild ramification and the characteristic cycle of an l-adic sheaf}, Journal de l?Institut de Mathematiques de Jussieu, (2009) 8(4), p. 769 - 829.
\;
\bibitem[Sai 10]{sai10} T. Saito, \textit{Wild ramification of schemes and sheaves},
Proceedings of the international congress of mathematicians (ICM
2010), Hyderabad.
\;
\bibitem[Sai 13]{sai13} T. Saito, \textit{Wild ramification and the cotangent bundle}, preprint, arXiv:1301.4632.
\;
\bibitem[Ser 68]{ser1} J.-P. Serre, \textit{Corps Locaux}, Hermann, Paris 1968.
\;
\bibitem[Wie 06]{wie1} G. Wiesend, \textit{A construction of covers of arithmetic schemes}, J. Number Theory 121 (2006), no. 1, p. 118 - 131.
\;
\bibitem[Xia-Zhu 13]{xiaozhukov} L. Xiao, I. Zhukov, \textit{Ramification of higher local fields, approaches and questions}, Proceedings of The
Second Valuation Theory Conference. Available at www.math.usask.ca/fvk/Xiao-Zhukov.pdf
\;
\bibitem[Zhu 08]{zhu08} I. Zhukov, \textit{Ramification of surfaces: sufficient jet order for wild jumps}, preprint, arXiv:0201071.
\end{thebibliography}
\end{document}